\documentclass[11pt, twoside]{article}
\usepackage{latexsym}
\usepackage{amsmath}
\usepackage{amssymb}
\usepackage[all]{xy}
\usepackage{amsfonts}
\usepackage{verbatim}
\usepackage{amsthm}
\usepackage{mathrsfs}
\usepackage{epsfig}
\usepackage{xy}
\usepackage{array}
\usepackage{stmaryrd}
\usepackage{graphicx,color}
\usepackage{xcolor}
\usepackage{tikz}
\usetikzlibrary{arrows,calc}
\usepackage{etex}
\usepackage{mathdots}
\usepackage{float}
\usepackage{graphics}
\usepackage{pdflscape}
\usepackage{extarrows}
\usepackage{anysize,hyperref}
\input xypic
\xyoption{all}
\usepackage{bm}
\usepackage[perpage,symbol]{footmisc}
\topmargin=-0.5truein \oddsidemargin=0truein
\evensidemargin=0truein \textwidth=6.2truein \textheight=9.5truein
\usepackage{setspace}
\linespread{1}

\renewcommand{\cal}{\mathcal}
\def\A{\mathcal{A}}
\def\T{\mathcal{T}}

\def\P{\mathcal{P}}
\def\I{\mathcal{I}}

\def\C{\mathscr{C}}
\def\E{\mathbb{E}}
\def\F{\mathbb{F}}
\def\s{\mathfrak{s}}
\def\id{\mathrm{id}}
\def\op{^\mathrm{op}}
\def\Ab{\mathsf{Ab}}
\def\del{\delta}
\def\dr{\ar@{->}[r]}

\def\X{\mathscr{X}}

\def\add{\mbox{add}}
\def\Ext{\mbox{Ext}}
\def\Hom{\mbox{Hom}}

\def\sets{\mathsf{Sets}\hspace{.01in}}
\newcommand{\CC}{{\bf{C}}^{n+2}_{\C}}
\newcommand{\mr}{\hbox{\boldmath$\cdot$}}
\newcommand{\ov}{\overset}
\newcommand{\lra}{\longrightarrow}
\newcommand{\co}{\colon}
\newcommand{\uas}{^{\ast}}            
\newcommand{\sas}{_{\ast}}
\newcommand{\Xd}{\langle X^{\mr},\del\rangle}  
\newcommand{\Yr}{\langle Y^{\mr},\rho\rangle}  

\newcommand{\ush}{^\sharp}           
\newcommand{\ssh}{_\sharp}
\SelectTips{cm}{10}

\begin{document}
\baselineskip=15pt
\title{\Large{\bf Frobenius $\bm{n}$-exangulated categories}}
\medskip
\author{Yu Liu and Panyue Zhou\footnote{Corresponding author.~Yu Liu is supported by the Fundamental Research Funds for the Central Universities (Grant No. 2682019CX51) and the National Natural Science Foundation of China (Grants No. 11901479). Panyue Zhou is supported by the National Natural Science Foundation of China (Grant Nos. 11901190 and 11671221), and the Hunan Provincial Natural Science Foundation of China (Grant Nos. 2018JJ3205),  and by the Scientific Research Fund of Hunan Provincial Education Department (Grant No. 19B239).}}

\date{}

\maketitle
\def\blue{\color{blue}}
\def\red{\color{red}}

\newtheorem{theorem}{Theorem}[section]
\newtheorem{lemma}[theorem]{Lemma}
\newtheorem{corollary}[theorem]{Corollary}
\newtheorem{proposition}[theorem]{Proposition}
\newtheorem{conjecture}{Conjecture}
\theoremstyle{definition}
\newtheorem{definition}[theorem]{Definition}
\newtheorem{question}[theorem]{Question}
\newtheorem{remark}[theorem]{Remark}
\newtheorem{remark*}[]{Remark}
\newtheorem{example}[theorem]{Example}
\newtheorem{example*}[]{Example}
\newtheorem{condition}[theorem]{Condition}
\newtheorem{condition*}[]{Condition}
\newtheorem{construction}[theorem]{Construction}
\newtheorem{construction*}[]{Construction}

\newtheorem{assumption}[theorem]{Assumption}
\newtheorem{assumption*}[]{Assumption}

\baselineskip=17pt
\parindent=0.5cm

\begin{abstract}
\baselineskip=16pt
Herschend--Liu--Nakaoka introduced the notion of $n$-exangulated
categories as higher dimensional analogues of extriangulated categories defined by Nakaoka--Palu.
The class of $n$-exangulated categories contains
$n$-exact categories and $(n+2)$-angulated categories as examples.
In this article,  we introduce a
notion of Frobenius $n$-exangulated categories
which are a generalization of Frobenius $n$-exact categories. We
show that the stable category of a Frobenius $n$-exangulated category is an
$(n+2)$-angulated category. As an application, this result
generalizes the work by Jasso.  We provide a class of $n$-exangulated categories which
are neither $n$-exact categories nor $(n+2)$-angulated categories.  Finally, we discuss an application of the main results and give some examples illustrating it.\\[0.5cm]
\textbf{Key words:} $n$-exangulated categories; $(n+2)$-angulated categories; $n$-exact categories.\\[0.2cm]
\textbf{ 2010 Mathematics Subject Classification:} 18E30; 18E10; 18G05.
\medskip
\end{abstract}

\pagestyle{myheadings}
\markboth{\rightline {\scriptsize   Y. Liu and P. Zhou}}
         {\leftline{\scriptsize  Frobenius $n$-exangulated categories}}

\section{Introduction}
Higher homological algebra was introduced by Iyama \cite{I}, and it deals with $n$-cluster tilting subcategories of abelian categories (resp. exact categories). All short exact sequences in such a subcategory are split, but it has nice exact sequences with $n+2$ objects. This was recently formalized by Jasso \cite{J} in the theory of $n$-abelian categories (resp. $n$-exact categories).
There exists also a derived version of the theory focusing on $n$-cluster tilting subcategories of triangulated categories as introduced by Gei{\ss}, Keller, and Oppermann in the theory of $(n+2)$-angulated categories in \cite{GKO}. The properties of $(n+2)$-angulated categories have been investigated by  Bergh--Thaule in \cite{BT}. Setting $n=1$ recovers the notions of abelian, exact and
triangulated categories.

Extriangulated categories were recently introduced by Nakaoka and Palu \cite{NP} by
extracting those properties of $\Ext^1$ on exact categories and on triangulated categories
that seem relevant from the point of view of cotorsion pairs. In particular, exact
categories and triangulated categories are extriangulated categories. There are a lot of examples of extriangulated categories which are neither exact triangulated categories nor triangulated categories.
The data of such a category is a triplet $(\C,\E,\s)$, where $\C$ is an additive category, $\E\colon\C\op\times\C\to\Ab$ is an additive bifunctor and $\s$ assigns to each $\delta\in \E(C,A)$ a class of $3$-term sequences with end terms $A$ and $C$ such that certain axioms hold. Herschend--Liu--Nakaoka \cite{HLN} introduced an $n$-analogue of this notion called $n$-exangulated categories. Such a category is a similar triplet $(\C,\E,\s)$, with the main distinction being that the $3$-term sequences mentioned above are replaced by $(n+2)$-term sequences. We note that the case $n=1$ corresponds to extriangulated categories.
As typical examples we have that $n$-exact and $(n+2)$-angulated categories are $n$-exangulated, see \cite[Proposition 4.34 and Proposition 4.5]{HLN}.
In an extriangulated category $(\C,\E,\s)$  that has enough projectives and
injectives, Liu--Nakaoka \cite{LN} introduced higher extension
groups $\E^i$ using dimension shift, and defined the notion
of an $n$-cluster tilting subcategory $\X$. Under suitable conditions,
 Herschend--Liu--Nakaoka \cite{HLN} showed that $\X$ is $n$-exangulated categories, see \cite[Theorem 5.41]{HLN}.
However, there are some other examples of $n$-exangulated categories which are neither $n$-exact nor $(n+2)$-angulated, see \cite[Section 6]{HLN}.

Motivated by the definitions of Frobenius $n$-exact categories and Frobenius extriangulated categories, we define Frobenius $n$-exangulated categories.
These are $n$-exangulated categories with enough
projectives and enough injectives, and such that these two classes of objects coincide.  Frobenius $n$-exangulated categories are related to $(n+2)$-angulated categories. We now state the first main result in this article.

\begin{theorem}\emph{(see Theorem \ref{main1} for details)}
Let $\C$ be a Frobenius $n$-exangulated category.
Then the stable category $\overline{\C}$ is an $(n+2)$-angulated category.
\end{theorem}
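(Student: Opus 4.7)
The plan is to mimic Happel's classical passage from Frobenius exact categories to triangulated categories, and Jasso's extension of this to the $n$-exact setting, adapting the constructions to the $n$-exangulated axioms (in particular the lifting property of morphisms of distinguished $n$-exangles from \cite{HLN} and the exangulated analogue of the octahedral axiom).

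\textbf{Step 1: Define the suspension functor.} Since $\C$ has enough injectives, for every $A \in \C$ I would fix a distinguished $n$-exangle
\[
A \xrightarrow{\alpha_A^0} I_A^0 \xrightarrow{\alpha_A^1} I_A^1 \to \cdots \to I_A^{n-1} \xrightarrow{\alpha_A^n} \Sigma A \dashrightarrow
\]
with each $I_A^i$ projective-injective (this exists by iterating the ``enough injectives'' property and then concatenating along (EA2) of $n$-exangulated categories). Given $f\co A\to A'$, I would build a morphism of $n$-exangles by repeatedly lifting along the injectivity of $I_{A'}^i$, and extract a morphism $\Sigma f\co \Sigma A \to \Sigma A'$. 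The usual diagram chase, together with the fact that two lifts differ by a morphism factoring through the injective terms, shows that $\Sigma f$ descends to a well-defined morphism in $\overline{\C}$ that is independent of all choices, making $\Sigma$ an additive autofunctor on $\overline{\C}$ (the invertibility uses the dual construction via projectives, which coincides with the injective one since they agree).

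\textbf{Step 2: Define the connecting morphism and the class of distinguished $(n+2)$-angles.} For each distinguished $n$-exangle
\[
A \xrightarrow{f_0} X_1 \xrightarrow{f_1} \cdots \xrightarrow{f_{n-1}} X_n \xrightarrow{f_n} C \overset{\delta}{\dashrightarrow}
\]
I would use injectivity of $I_A^0,\ldots,I_A^{n-1}$ and the good-lift property for morphisms of $n$-exangles to produce a chain map from this $n$-exangle to the chosen $\Sigma A$-exangle starting with $\mathrm{id}_A$; its last component $g\co C\to \Sigma A$ is well-defined in $\overline{\C}$. I would then declare a sequence in $\overline{\C}$ to be a distinguished $(n+2)$-angle if it is isomorphic (in $\overline{\C}$) to one of the form
\[
A \to X_1 \to \cdots \to X_n \to C \xrightarrow{g} \Sigma A
\]
obtained in this manner.

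\textbf{Step 3: Verify the axioms (F1)--(F4) of Geiss--Keller--Oppermann.} (F1) (closure under isomorphisms/sums, triviality of identity triangles, and the fact that any morphism $f_0$ can be completed) is immediate from the corresponding properties of $n$-exangulated categories. For (F3), the morphism axiom, I would use once more the lifting property for morphisms between distinguished $n$-exangles, which furnishes the first $n+1$ vertical maps, and then check compatibility with the connecting morphism using the naturality of $\Sigma$ built in Step 1.

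\textbf{Main obstacle.} The two delicate axioms are (F2) (rotation) and (F4) (the higher mapping-cone / octahedral axiom). For (F2) I would splice the chosen $\Sigma$-defining $n$-exangle of $A$ with a representative of the given distinguished $n$-exangle, invoking (EA2) to produce a new $n$-exangle whose associated connecting sequence realizes the rotated triangle (with the expected sign coming from the standard sign convention in the rotation). For (F4), the standard strategy is to build the appropriate ``cone'' $n$-exangle by repeatedly applying the $n$-exangulated octahedral-type axiom (EA2) and its dual, then identify the resulting diagram in $\overline{\C}$ with the required $(n+2)$-angle; this is the longest and most intricate part and is where the full strength of the $n$-exangulated axioms is needed. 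I would expect the verification to parallel Jasso's proof in the $n$-exact case and the Nakaoka--Palu proof in the $n=1$ extriangulated case, with the essential novelty being that the connecting morphism now has to be constructed using the lifting property of $n$-exangles rather than from an honest cokernel or mapping cone.
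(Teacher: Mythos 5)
Your overall strategy---build $\Sigma$ from injective resolutions, construct connecting morphisms, and then verify the Geiss--Keller--Oppermann axioms (F1)--(F4) directly---is the Happel/Jasso route, and it can in principle be carried through (this is essentially the alternative proof of Zheng--Wei \cite{ZW}). It is, however, not the route taken in the paper. The paper never touches (F1)--(F4) individually: it first shows that the stable category $(\overline{\C},\overline{\E},\overline{\s})$, with $\overline{\E}=\E$ and with distinguished $n$-exangles the images of those of $\C$, is itself an $n$-exangulated category, then establishes the natural isomorphism $\overline{\E}(-,-)\cong\overline{\C}(-,S(-))$ for the autoequivalence $S$, and finally invokes \cite[Proposition 4.8]{HLN}, which identifies $n$-exangulated categories with corepresentable $\E$-bifunctor with $(n+2)$-angulated categories. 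That reduction replaces the verification of rotation and the higher octahedron by the (much lighter) verification of (EA1), (EA2) and exactness in $\overline{\C}$, which is precisely what makes the paper's proof short.

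More seriously, as a proof your proposal is incomplete exactly where it matters. The two axioms you yourself single out as the crux, (F2) and (F4), are only described as things you ``would'' do; for (F4) in particular, ``repeatedly applying (EA2) and its dual and then identifying the resulting diagram'' is a statement of intent, not an argument, and it is where all the work of this theorem lives. In addition, your claim that the completion part of (F1) is ``immediate from the corresponding properties of $n$-exangulated categories'' is not correct: in an $n$-exangulated category an arbitrary morphism need not be an inflation or a deflation, so a morphism $\overline{f}\colon C\to A$ of $\overline{\C}$ cannot be completed to an $(n+2)$-angle without using the Frobenius structure. The paper handles this by pulling back the fixed distinguished $n$-exangle $TA\to I_1\to\cdots\to I_n\to A\dashrightarrow$ along $f$ and taking the mapping cone of a good lift; the resulting distinguished $n$-exangle ends in a morphism that agrees with $\overline{f}$ in $\overline{\C}$, which shows that every morphism of $\overline{\C}$ is a deflation up to the quotient and simultaneously yields (EA1) for $\overline{\C}$. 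Until this stabilization step and the verifications of (F2) and (F4) (or, alternatively, the reduction to \cite[Proposition 4.8]{HLN}) are actually carried out, the proposal remains a plan rather than a proof.
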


This generalizes a result by Jasso \cite[Theorem 5.11]{J} for Frobenius $n$-exact categories.

In order to give the second main result in this article, we need the following some notions.
Let $(\C,\E,\s)$ be an $n$-exangulated category and $\X$ be a subcategory of $\C$.
For each pair of objects $A$ and $C$ in $\C$, define
$$\F^{\X}(C,A)=
\{A_0\xrightarrow{f}A_1\xrightarrow{}A_2\xrightarrow{}\cdots\xrightarrow{}A_{n-1}
\xrightarrow{}A_n\xrightarrow{}A_{n+1}\overset{\delta}{\dashrightarrow}
\,\mid\, f\ \textrm{is an}\ \X\textrm{-monic}\}.$$
Dually, we define for each pair of objects $A$ and $C$ in $\C$
$$\F_{\X}(C,A)=
\{A_0\xrightarrow{}A_1\xrightarrow{}A_2\xrightarrow{}\cdots\xrightarrow{}A_{n-1}
\xrightarrow{}A_n\xrightarrow{g}A_{n+1}\overset{\delta}{\dashrightarrow}
\,\mid\, g\ \textrm{is an}\ \X\textrm{-epic}\}.$$

\begin{theorem}\emph{(see Theorem \ref{main2} for details)}
Let $(\C,\E,\s)$ be an $n$-exangulated category.
If $\X$ is a strongly functorially finite subcategory of $\C$, then $(\C,\F,\s_{\F})$ is a Frobenius $n$-exangulated category whose projective-injective objects are precisely $\X$, where $\F:=\F^{\X}\cap~\F_{\X}$.
\end{theorem}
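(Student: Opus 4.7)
The plan is to verify three distinct items in order: (a) the triplet $(\C,\F,\s_\F)$ is indeed an $n$-exangulated category, i.e.\ $\F$ is a closed additive subbifunctor of $\E$; (b) this new $n$-exangulated structure has enough $\F$-projectives and enough $\F$-injectives; and (c) both the $\F$-projectives and the $\F$-injectives coincide exactly with $\X$. Once all three are established, the Frobenius property follows by definition.

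For (a), I would first check that $\F=\F^{\X}\cap\F_{\X}$ is an additive subbifunctor of $\E$, which amounts to showing that the conditions "first map is $\X$-monic'' and "last map is $\X$-epic'' are preserved under $a_*$ and $c^*$ for arbitrary morphisms $a\colon A\to A'$ and $c\colon C'\to C$. This is a pullback/pushout argument: given an $\F$-distinguished $(n+2)$-sequence with $\X$-monic first map $f$, a diagram chase using the good pushout supplied by the $n$-exangulated axioms of $(\C,\E,\s)$ shows that the new first map is still $\X$-monic (it factors any morphism from $\X$ compatibly with $f$). The dual handles $\X$-epics. With this in hand, I would verify the $n$-exangulated axioms (R0)--(R2), (EA1), and (EA2)/(EA2)$^{\mathrm{op}}$ of \cite{HLN} for $\s_\F$. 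The delicate step is (EA1), which requires that the composition of two $\X$-monics coming from $\F$-inflations is again $\X$-monic, and the dual statement for $\X$-epics; this is exactly where the \emph{strong} functorial finiteness of $\X$ is used, since plain functorial finiteness does not guarantee compositional stability.

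For (b), the strongly functorially finite hypothesis yields, for each $C\in\C$, a right $\X$-approximation $X_0\to C$ that can be completed to a distinguished $(n+2)$-sequence $A\to B_1\to\cdots\to B_{n-1}\to X_0\to C$ in $(\C,\E,\s)$. I would then iteratively pull back along further right $\X$-approximations of the intermediate terms, using axiom (EA2)$^{\mathrm{op}}$ to replace the middle terms $B_i$ by objects of $\X$. The resulting sequence will have all required maps being $\X$-epic (this propagates from the approximation property), so it lies in $\F$ and exhibits enough $\F$-projectives with objects from $\X$. The construction of enough $\F$-injectives is dual, using that $\X$ is covariantly finite and that left $\X$-approximations of the appropriate objects give rise to $\F$-deflations on the other end.

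For (c), every $X\in\X$ is $\F$-projective: given any $\F$-distinguished sequence $A_0\to\cdots\to A_n\xrightarrow{g}A_{n+1}\overset{\delta}{\dashrightarrow}$ with $g$ an $\X$-epic, every morphism $X\to A_{n+1}$ lifts through $g$, so $\delta$ vanishes on $X$. Conversely, if $P$ is $\F$-projective, take the $\F$-distinguished sequence $A\to X_{n-1}\to\cdots\to X_0\xrightarrow{g}P$ constructed in (b) with all $X_i\in\X$; the $\F$-projectivity of $P$ forces $\id_P$ to lift through $g$, so $P$ is a direct summand of $X_0$, and hence lies in $\X$ (which is closed under summands as a functorially finite subcategory). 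The injective statement is dual. Combining, the classes of $\F$-projectives and $\F$-injectives both equal $\X$, proving the Frobenius condition. I expect the main obstacle to be the rigorous verification of (EA1) and (EA2) for $\s_\F$, where one must carefully track how $\X$-monic and $\X$-epic conditions interact with the good pullbacks and pushouts supplied by the ambient $n$-exangulated structure.
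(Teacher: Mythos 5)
Your three-part decomposition (closed subfunctor, enough projectives/injectives, identification of both classes with $\X$) is exactly the paper's architecture, and your arguments for ``$\F^{\X}$, $\F_{\X}$ are subfunctors'' and for step (c) match Lemmas \ref{lem1}--\ref{lem2} and the proof of Theorem \ref{main2}. However, your account of where the hypothesis enters is wrong. You claim that (EA1) for $\s_{\F}$ is ``exactly where the strong functorial finiteness of $\X$ is used, since plain functorial finiteness does not guarantee compositional stability.'' In fact the composite of two $\X$-monics is $\X$-monic for an \emph{arbitrary} subcategory $\X$ (it is a composite of two surjections $\C(C,X)\to\C(B,X)\to\C(A,X)$), and dually for $\X$-epics; this is why the paper's Lemma \ref{key} establishes that $(\C,\F^{\X},\s_{\F^{\X}})$ and $(\C,\F_{\X},\s_{\F_{\X}})$ are $n$-exangulated with \emph{no} finiteness assumption whatsoever, via Lemma \ref{a6}. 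Strong functorial finiteness is used only in your steps (b) and (c).

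In step (b) you also depart from the paper in a way that does not work as written. The definition of ``strongly contravariantly finite'' (Definition \ref{dd1}) already hands you, for each $C$, a distinguished $n$-exangle $B\to X_1\to\cdots\to X_n\xrightarrow{g}C\dashrightarrow$ with \emph{all} middle terms in $\X$ and $g$ a right $\X$-approximation; there is nothing to build by ``iteratively pulling back along further right $\X$-approximations of the intermediate terms,'' and that iteration is not justified anyway (applying (EA2)$^{\mathrm{op}}$ to replace one $B_i$ changes the ambient exangle, and you never show the process preserves the end terms or the extension class). More importantly, your assertion that the resulting sequence ``lies in $\F$'' because ``all required maps are $\X$-epic'' is a non sequitur: membership in $\F=\F^{\X}\cap\F_{\X}$ requires the \emph{first} map to be $\X$-monic as well as the last map to be $\X$-epic, and nothing in your construction addresses the $\X$-monicity of $B\to X_1$. (This point is genuinely delicate --- the paper's own proof also invokes the approximation exangle as if it were $\F$-distinguished without verifying the $\F^{\X}$ half --- but your argument makes the leap explicitly and incorrectly, so it cannot be waved through.) Since both (b) and your converse direction in (c) rest on these sequences being $\F$-distinguished, this is a real gap in the proposal, not merely a presentational difference.
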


The above construction gives $n$-exangulated categories which are not
$n$-exact nor $(n+2)$-angulated in general.

This article is organized as follows. In Section 2, we review some elementary definitions
and facts on $n$-exangulated categories. In Section 3, we prove our first main
result. In Section 4, we prove our second main result. In Section 5, we discuss an application of main results and  give some examples illustrating our main results.

\section{Preliminaries}
Let us briefly recall some definitions and basic properties of $n$-exangulated categories from \cite{HLN}.
Throughout this article, let $\C$ be an additive category and $n$ be any positive integer.

\begin{definition}\cite[Definition 2.1]{HLN}
Suppose that $\C$ is equipped with an additive bifunctor $\E\colon\C\op\times\C\to\Ab$, where $\Ab$ is the category of abelian groups. For any pair of objects $A,C\in\C$, an element $\del\in\E(C,A)$ is called an {\it $\E$-extension} or simply an {\it extension}. We also write such $\del$ as ${}_A\del_C$ when we indicate $A$ and $C$.

Let ${}_A\del_C$ be any extension. Since $\E$ is a bifunctor, for any $a\in\C(A,A')$ and $c\in\C(C',C)$, we have extensions
$$ \E(C,a)(\del)\in\E(C,A')\ \ \text{and}\ \ \E(c,A)(\del)\in\E(C',A). $$
We abbreviately denote them by $a_{\ast}\del$ and $c^{\ast}\del$.
In this terminology, we have
$$\E(c,a)(\del)=c^{\ast}a_{\ast}\del=a_{\ast}c^{\ast}\del\in\E(C',A').$$
For any $A,C\in\C$, the zero element ${}_A0_C=0\in\E(C,A)$ is called the {\it split $\E$-extension}.
\end{definition}

\begin{definition}\cite[Definition 2.3]{HLN}
Let ${}_A\del_C,{}_{A'}\del'_{C'}$ be any pair of $\E$-extensions. A {\it morphism} $(a,c)\colon\del\to\del'$ of extensions is a pair of morphisms $a\in\C(A,B)$ and $c\in\C(A',C')$ in $\C$, satisfying the equality
$$a_{\ast}\del=c^{\ast}\del'. $$
\end{definition}

\begin{definition}\cite[Definition 2.7]{HLN}
Let $\bf{C}_{\C}$ be the category of complexes in $\C$. As its full subcategory, define $\CC$ to be the category of complexes in $\C$ whose components are zero in the degrees outside of $\{0,1,\ldots,n+1\}$. Namely, an object in $\CC$ is a complex $X^{\mr}=\{X_i,d^X_i\}$ of the form
\[ X_0\xrightarrow{d^X_0}X_1\xrightarrow{d^X_1}\cdots\xrightarrow{d^X_{n-1}}X_n\xrightarrow{d^X_n}X_{n+1}. \]
We write a morphism $f^{\mr}\co X^{\mr}\to Y^{\mr}$ simply $f^{\mr}=(f^0,f^1,\ldots,f^{n+1})$, only indicating the terms of degrees $0,\ldots,n+1$.
\end{definition}

\begin{definition}\cite[Definition 2.11]{HLN}
By Yoneda lemma, any extension $\del\in\E(C,A)$ induces natural transformations
\[ \del\ssh\colon\C(-,C)\Rightarrow\E(-,A)\ \ \text{and}\ \ \del\ush\colon\C(A,-)\Rightarrow\E(C,-). \]
For any $X\in\C$, these $(\del\ssh)_X$ and $\del\ush_X$ are given as follows.
\begin{enumerate}
\item[\rm(1)] $(\del\ssh)_X\colon\C(X,C)\to\E(X,A)\ ;\ f\mapsto f\uas\del$.
\item[\rm (2)] $\del\ush_X\colon\C(A,X)\to\E(C,X)\ ;\ g\mapsto g\sas\delta$.
\end{enumerate}
We abbreviately denote $(\del\ssh)_X(f)$ and $\del\ush_X(g)$ by $\del\ssh(f)$ and $\del\ush(g)$, respectively.
\end{definition}

\begin{definition}\cite[Definition 2.9]{HLN}
 Let $\C,\E,n$ be as before. Define a category $\AE:=\AE^{n+2}_{(\C,\E)}$ as follows.
\begin{enumerate}
\item[\rm(1)]  A pair $\Xd$ is an object of the category $\AE$ with $X^{\mr}\in\CC$
and $\del\in\E(X_{n+1},X_0)$ is called an $\E$-attached
complex of length $n+2$, if it satisfies
$$(d_0^X)_{\ast}\del=0~~\textrm{and}~~(d^X_n)^{\ast}\del=0.$$
We also denote it by
$$X_0\xrightarrow{d_0^X}X_1\xrightarrow{d_1^X}\cdots\xrightarrow{d_{n-2}^X}X_{n-1}
\xrightarrow{d_{n-1}^X}X_n\xrightarrow{d_n^X}X_{n+1}\overset{\delta}{\dashrightarrow}$$
\item[\rm (2)]  For such pairs $\Xd$ and $\langle Y^{\mr},\rho\rangle$,  $f^{\mr}\colon\Xd\to\langle Y^{\mr},\rho\rangle$ is
defined to be a morphism in $\AE$ if it satisfies $(f_0)_{\ast}\del=(f_{n+1})^{\ast}\rho$.

\end{enumerate}
\end{definition}

\begin{definition}\cite[Definition 2.13]{HLN}
 An {\it $n$-exangle} is an object $\Xd$ in $\AE$ that satisfies the listed conditions.
\begin{enumerate}
\item[\rm (1)] The following sequence of functors $\C\op\to\Ab$ is exact.
$$
\C(-,X_0)\xLongrightarrow{\C(-,\ d^X_0)}\cdots\xLongrightarrow{\C(-,\ d^X_n)}\C(-,X_{n+1})\xLongrightarrow{~\del\ssh~}\E(-,X_0)
$$
\item[\rm (2)] The following sequence of functors $\C\to\Ab$ is exact.
$$
\C(X_{n+1},-)\xLongrightarrow{\C(d^X_n,\ -)}\cdots\xLongrightarrow{\C(d^X_0,\ -)}\C(X_0,-)\xLongrightarrow{~\del\ush~}\E(X_{n+1},-)
$$
\end{enumerate}
In particular any $n$-exangle is an object in $\AE$.
A {\it morphism of $n$-exangles} simply means a morphism in $\AE$. Thus $n$-exangles form a full subcategory of $\AE$.
\end{definition}

\begin{definition}\cite[Definition 2.22]{HLN}
Let $\s$ be a correspondence which associates a homotopic equivalence class $\s(\del)=[{}_AX^{\mr}_C]$ to each extension $\del={}_A\del_C$. Such $\s$ is called a {\it realization} of $\E$ if it satisfies the following condition for any $\s(\del)=[X^{\mr}]$ and any $\s(\rho)=[Y^{\mr}]$.
\begin{itemize}
\item[{\rm (R0)}] For any morphism of extensions $(a,c)\co\del\to\rho$, there exists a morphism $f^{\mr}\in\CC(X^{\mr},Y^{\mr})$ of the form $f^{\mr}=(a,f_1,\ldots,f_n,c)$. Such $f^{\mr}$ is called a {\it lift} of $(a,c)$.
\end{itemize}
In such a case, we simple say that \lq\lq$X^{\mr}$ realizes $\del$" whenever they satisfy $\s(\del)=[X^{\mr}]$.

Moreover, a realization $\s$ of $\E$ is said to be {\it exact} if it satisfies the following conditions.
\begin{itemize}
\item[{\rm (R1)}] For any $\s(\del)=[X^{\mr}]$, the pair $\Xd$ is an $n$-exangle.
\item[{\rm (R2)}] For any $A\in\C$, the zero element ${}_A0_0=0\in\E(0,A)$ satisfies
\[ \s({}_A0_0)=[A\ov{\id_A}{\lra}A\to0\to\cdots\to0\to0]. \]
Dually, $\s({}_00_A)=[0\to0\to\cdots\to0\to A\ov{\id_A}{\lra}A]$ holds for any $A\in\C$.
\end{itemize}
Note that the above condition {\rm (R1)} does not depend on representatives of the class $[X^{\mr}]$.
\end{definition}

\begin{definition}\cite[Definition 2.23]{HLN}
Let $\s$ be an exact realization of $\E$.
\begin{enumerate}
\item[\rm (1)] An $n$-exangle $\Xd$ is called a $\s$-{\it distinguished} $n$-exangle if it satisfies $\s(\del)=[X^{\mr}]$. We often simply say {\it distinguished $n$-exangle} when $\s$ is clear from the context.
\item[\rm (2)]  An object $X^{\mr}\in\CC$ is called an {\it $\s$-conflation} or simply a {\it conflation} if it realizes some extension $\del\in\E(X_{n+1},X_0)$.
\item[\rm (3)]  A morphism $f$ in $\C$ is called an {\it $\s$-inflation} or simply an {\it inflation} if it admits some conflation $X^{\mr}\in\CC$ satisfying $d_X^0=f$.
\item[\rm (4)]  A morphism $g$ in $\C$ is called an {\it $\s$-deflation} or simply a {\it deflation} if it admits some conflation $X^{\mr}\in\CC$ satisfying $d_X^n=g$.
\end{enumerate}
\end{definition}

\begin{definition}\cite[Definition 2.27]{HLN}
For a morphism $f^{\mr}\in\CC(X^{\mr},Y^{\mr})$ satisfying $f^0=\id_A$ for some $A=X_0=Y_0$, its {\it mapping cone} $M_f^{\mr}\in\CC$ is defined to be the complex
\[ X_1\xrightarrow{d^{M_f}_0}X_2\oplus Y_1\xrightarrow{d^{M_f}_1}X_3\oplus Y_2\xrightarrow{d^{M_f}_2}\cdots\xrightarrow{d^{M_f}_{n-1}}X_{n+1}\oplus Y_n\xrightarrow{d^{M_f}_n}Y_{n+1} \]
where $d^{M_f}_0=\begin{bmatrix}-d^X_1\\ f_1\end{bmatrix},$
$d^{M_f}_i=\begin{bmatrix}-d^X_{i+1}&0\\ f_{i+1}&d^Y_i\end{bmatrix}\ (1\le i\le n-1),$
$d^{M_f}_n=\begin{bmatrix}f_{n+1}&d^Y_n\end{bmatrix}$.

{\it The mapping cocone} is defined dually, for morphisms $h^{\mr}$ in $\CC$ satisfying $h_{n+1}=\id$.
\end{definition}

\begin{definition}\cite[Definition 2.32]{HLN}
An {\it $n$-exangulated category} is a triplet $(\C,\E,\s)$ of additive category $\C$, additive bifunctor $\E\co\C\op\times\C\to\Ab$, and its exact realization $\s$, satisfying the following conditions.
\begin{itemize}
\item[{\rm (EA1)}] Let $A\ov{f}{\lra}B\ov{g}{\lra}C$ be any sequence of morphisms in $\C$. If both $f$ and $g$ are inflations, then so is $g\circ f$. Dually, if $f$ and $g$ are deflations then so is $g\circ f$.

\item[{\rm (EA2)}] For $\rho\in\E(D,A)$ and $c\in\C(C,D)$, let ${}_A\langle X^{\mr},c\uas\rho\rangle_C$ and ${}_A\Yr_D$ be distinguished $n$-exangles. Then $(\id_A,c)$ has a {\it good lift} $f^{\mr}$, in the sense that its mapping cone gives a distinguished $n$-exangle $\langle M^{\mr}_f,(d^X_0)\sas\rho\rangle$.
\item[{\rm (EA2$\op$)}] Dual of {\rm (EA2)}.
\end{itemize}
Note that the case $n=1$, a triplet $(\C,\E,\s)$ is a  $1$-exangulated category if and only if it is an extriangulated category, see \cite[Proposition 4.3]{HLN}.
\end{definition}

\begin{example}
From \cite[Proposition 4.34]{HLN} and \cite[Proposition 4.5]{HLN},  we know that $n$-exact categories and $(n+2)$-angulated categories are $n$-exangulated categories.
There are some other examples of $n$-exangulated categories
 which are neither $n$-exact nor $(n+2)$-angulated, see \cite[Section 6]{HLN} for more details.
\end{example}

\begin{lemma}\label{a1}
Let $(\C,\E,\s)$ be an $n$-exangulated category, and
$$A_0\xrightarrow{\alpha_0}A_1\xrightarrow{\alpha_1}A_2\xrightarrow{\alpha_2}\cdots\xrightarrow{\alpha_{n-2}}A_{n-1}
\xrightarrow{\alpha_{n-1}}A_n\xrightarrow{\alpha_n}A_{n+1}\overset{\delta}{\dashrightarrow}$$
a distinguished $n$-exangle. Then we have the following long exact sequences:
$$\C(-, A_0)\xrightarrow{}\C(-, A_1)\xrightarrow{}\cdots\xrightarrow{}
\C(-, A_{n+1})\xrightarrow{}\E(-, A_{0})\xrightarrow{}\E(-, A_{1})\xrightarrow{}\E(-, A_{2});$$
$$\C(A_{n+1},-)\xrightarrow{}\C(A_{n},-)\xrightarrow{}\cdots\xrightarrow{}
\C(A_0,-)\xrightarrow{}\E(A_{n+1},-)\xrightarrow{}\E(A_{n},-)\xrightarrow{}\E(A_{n-1},-).$$
\end{lemma}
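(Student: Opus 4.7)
The plan is to observe first that the sequences up to and including the connecting maps $\del\ssh\co\C(-,A_{n+1})\to\E(-,A_0)$ and $\del\ush\co\C(A_0,-)\to\E(A_{n+1},-)$ are already built into the definition of an $n$-exangle (Definition 2.13), so the only content of the lemma is to extend each sequence by the two extra $\E$-terms displayed. That consecutive composites vanish is immediate from the functoriality of $\E$: for the first sequence, $(\alpha_0)\sas(f\uas\delta)=f\uas((\alpha_0)\sas\delta)=0$ since $\langle X^\mr,\delta\rangle\in\AE$ forces $(\alpha_0)\sas\delta=0$, and $(\alpha_1)\sas\circ(\alpha_0)\sas=(\alpha_1\alpha_0)\sas=0$ because $X^\mr$ is a complex; the second sequence is dual.

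For exactness at $\E(-,A_0)$, I would fix $X\in\C$ and $\xi\in\E(X,A_0)$ with $(\alpha_0)\sas\xi=0$, and realize $\xi$ by a distinguished $n$-exangle $\langle B^\mr,\xi\rangle$ with initial differential $\beta_0\co A_0\to B_1$. The covariant exactness for $\xi$ at $A_1$ converts the hypothesis $\xi\ush(\alpha_0)=(\alpha_0)\sas\xi=0$ into a factorization $u\beta_0=\alpha_0$ for some $u\co B_1\to A_1$. I would then extend $(\id_{A_0},u)$ across the middle degrees inductively: given the partially constructed components $g_i\co B_i\to A_i$, the identity $\alpha_i g_i\beta_{i-1}=\alpha_i\alpha_{i-1}g_{i-1}=0$ (complex identity of $X^\mr$) together with the covariant exactness for $\xi$ at $A_{i+1}$ yields $g_{i+1}$ with $g_{i+1}\beta_i=\alpha_ig_i$. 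The top component is $f\co X\to A_{n+1}$, and one verifies, using (R0) together with the realization property of $\s$, that this $f$ actually satisfies $f\uas\delta=\xi$, producing the required preimage of $\xi$ under $\del\ssh$.

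For exactness at $\E(-,A_1)$, given $\zeta\in\E(X,A_1)$ with $(\alpha_1)\sas\zeta=0$, I would use (EA1) to paste inflations and produce a shifted distinguished $n$-exangle whose initial map is (a representative of) $\alpha_1$; applying the previously established step to this shifted sequence transfers $\zeta$ backwards through $\alpha_0$ to an element $\omega\in\E(X,A_0)$ with $(\alpha_0)\sas\omega=\zeta$. The covariant sequence is then proved by the entirely dual argument, with the roles of $\del\ssh$ and $\del\ush$ interchanged and (EA2$\op$) used in place of (EA2).

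The main obstacle lies in the last assertion of the second paragraph: once the recursive construction has produced a chain map between the two realizations, one still has to upgrade it to a genuine morphism of extensions $(\id_{A_0},f)\co\xi\to\delta$, that is, to establish the equality $f\uas\delta=\xi$ in $\E(X,A_0)$ rather than merely the commutativity of the underlying complex diagram. This passage from ``chain map between realizations'' to ``equality of $\E$-extensions'' is the sole nonformal step, and it is precisely what the axiom (R0) together with the homotopy equivalence built into $\s$ is designed to guarantee; once this bridge is crossed, both long exact sequences drop out as formal consequences of Definition 2.13 and (EA1).
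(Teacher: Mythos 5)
Your reduction to the two extra $\E$-terms is the right framing, and the recursive chain-map construction in your second paragraph is fine as far as it goes, but the step you yourself flag as ``the main obstacle'' is a genuine gap, not a routine appeal to the axioms. Axiom (R0) runs in the opposite direction: it takes a morphism of extensions $(a,c)\co\xi\to\del$ and produces a lift $f^{\mr}$; it says nothing about when a chain map $(\id_{A_0},g_1,\ldots,g_n,f)$ between the chosen realizations of $\xi$ and $\del$ satisfies $f\uas\del=\xi$, and in general it does not. Already for $n=1$ one sees the failure: in the stable module category of $k[x]/(x^2)$ the nonzero extension $\del\in\E(k,k)$ is realized by $k\to 0\to k$, every triple $(\id_k,0,f)$ is a chain map from this realization to itself, yet $f\uas\del=\del\circ f$ equals $\del$ only when $f$ is invertible; your construction has no mechanism forcing it to output such an $f$, since the factorization through $\beta_n$ it invokes is highly non-unique (here any $f$, including $f=0$, satisfies the required identity $f\beta_n=\alpha_n g_n=0$). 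The homotopy equivalence built into $\s$ does not rescue this either: $(\id,0,0)$ and $(\id,0,\id)$ are not homotopic in ${\bf C}^{3}_{\C}$. Closing this gap is exactly where the good-lift and mapping-cone axioms (EA2), (EA2$\op$) must enter. The same criticism applies to your treatment of exactness at $\E(-,A_1)$: ``use (EA1) to paste inflations'' does not produce a distinguished $n$-exangle beginning with $\alpha_1$, which need not be an inflation at all.

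For comparison, the paper does not prove the lemma directly but cites [HLN, Definition 2.13 and Corollary 3.11]: the exactness of the segments ending in $\E(-,A_0)$, respectively $\E(A_{n+1},-)$, is literally the definition of an $n$-exangle, while the continuation by two further $\E$-terms is Corollary 3.11 of Herschend--Liu--Nakaoka, whose proof relies on the mapping-cone machinery of their Section 3. The portion you are trying to reprove by hand is precisely the non-formal part, and your argument as written does not capture it.
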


\proof This follows from Definition 2.13 and Corollary 3.11 in \cite{HLN}.  \qed

\begin{definition}\cite[Definition 3.6 and Definition 3.7]{HLN}
Let $\C$ be an additive category and $\E\colon \C\op\times\C\to\Ab$ be  an additive
bifunctor.
\begin{enumerate}
\item[\rm (1)] A functor $\F\colon \C\op\times\C\to\sets$ is called a \emph{subfunctor} of $\E$ if it satisfies the following conditions.
\begin{itemize}
\item $\F(C,A)$ is a subset of $\E(C,A)$, for any $A,C\in\C$.

\item $\F(c,a)=\E(c,a)|_{\F(C,A)}$ holds, for any $a\in\C(A,A')$ and $c\in\C(C',C)$.
In this case, we write as $\F\subseteq\E$.
\end{itemize}

\item[\rm (2)] A subfunctor $\F\subseteq\E$ is said to be an \emph{additive subfunctor}
if $\F(C,A)\subseteq\E(C,A)$ is an abelian subgroup for any $A,C\in\C$.
In this case, $\F\colon \C\op\times\C\to\Ab$
itself becomes a additive bifunctor.

\item[\rm (3)] Let $\F\subseteq\E$ be an additive subfunctor. For a realization $\s$ of $\E$,
define $\s|_{\F}$ to be the restriction of s onto $\F$. Namely, it is defined by $\s|_{\F}(\delta)=\s(\delta)$ for any $\F$-extension $\delta$.
\end{enumerate}
\end{definition}

\begin{lemma}\emph{\cite[Proposition 3.14]{HLN}}\label{a6}
Let $(\C,\E,\s)$ be an $n$-exangulated category. For any additive subfunctor $\mathbb{F}\subseteq\E$, the following statements are equivalent.
\begin{itemize}
\item[\rm (1)] $(\C,\mathbb{F},\s_{\mathbb{F}})$ is $n$-exangulated.

\item[\rm (2)] $\s_{\mathbb{F}}$-inflations are  closed under composition.

\item[\rm (3)] $\s_{\mathbb{F}}$-deflations are closed under composition.
\end{itemize}
\end{lemma}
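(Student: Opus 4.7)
The plan is to establish (1) $\Leftrightarrow$ (2) in detail and to deduce (1) $\Leftrightarrow$ (3) by the evident dual argument. The forward direction (1) $\Rightarrow$ (2) (and dually (1) $\Rightarrow$ (3)) is immediate: in any $n$-exangulated structure, axiom (EA1) applied to $(\C,\F,\s_{\F})$ is precisely the statement that $\s_{\F}$-inflations are closed under composition.

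For the converse (2) $\Rightarrow$ (1), I would verify each axiom of an $n$-exangulated category for the triplet $(\C,\F,\s_{\F})$, noting that most of them transfer almost for free from the $\E$-structure because $\F\subseteq\E$ is an additive subfunctor and $\s_{\F}=\s|_{\F}$. Axiom (R0) follows because any morphism of $\F$-extensions is in particular a morphism of $\E$-extensions, so a lift exists for $\s$ and automatically serves for $\s_{\F}$. For (R1), when $\del\in\F(C,A)$ the natural transformations $\del\ssh$ and $\del\ush$ factor through the subgroups $\F(-,A)\subseteq\E(-,A)$ and $\F(C,-)\subseteq\E(C,-)$ by the subfunctor property, and the exactness statements of Definition~2.13 are preserved under corestriction to these subgroups. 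Axiom (R2) is trivial, since the zero element of $\F$ coincides with that of $\E$. For (EA2) (and dually (EA2$\op$)), given $\rho\in\F(D,A)$ and $c\in\C(C,D)$, the pullback $c\uas\rho$ and the pushforward $(d_0^X)\sas\rho$ both lie in $\F$; hence the good lift $f^{\mr}$ supplied by (EA2) for $\s$ yields a mapping cone $\langle M_f^{\mr},(d_0^X)\sas\rho\rangle$ which is a distinguished $n$-exangle for $\s_{\F}$ as well.

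The main obstacle is completing (EA1) for $\s_{\F}$: hypothesis (2) supplies only the closure of inflations under composition, so closure of $\s_{\F}$-deflations under composition must still be derived. The approach is, given two $\s_{\F}$-deflations $g$ and $h$ with $h\circ g$ defined, to extend each to a distinguished $n$-exangle in $\s_{\F}$ and to apply (EA2$\op$) — already inherited by $\s_{\F}$ in the previous step — in order to produce a good lift whose mapping cocone realizes an extension having $h\circ g$ as its $n$-th differential. The delicate point is to verify that this constructed extension again lies in $\F$, which it does because it is obtained from the two original $\F$-extensions through the push-forwards and pull-backs under which $\F$ is closed. Once this is in place, (EA1) holds in full for $\s_{\F}$, all remaining axioms have already been checked, and we conclude that $(\C,\F,\s_{\F})$ is $n$-exangulated.
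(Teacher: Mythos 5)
The paper does not actually prove this lemma: it is imported verbatim as \cite[Proposition 3.14]{HLN}, so there is no internal argument to compare against. Judged on its own terms, your proposal handles the routine parts correctly. The implication (1)$\Rightarrow$(2),(3) is indeed immediate from (EA1), and your verification that (R0), (R1), (R2), (EA2) and (EA2$\op$) descend to $\s_{\F}$ is sound --- in particular the observation that the exactness required in (R1) survives corestriction because $\ker(\del\ssh)$ is unchanged when the codomain shrinks from $\E(-,X_0)$ to $\F(-,X_0)$, and that $c\uas\rho$ and $(d_0^X)\sas\rho$ stay in $\F$ by the subfunctor property.

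The gap is exactly at the point you flag as ``the delicate point,'' and your proposed resolution does not work. First, structurally: the last differential of a mapping cocone supplied by (EA2$\op$) is a $1\times 2$ matrix $X_n\oplus Y_{n-1}\to Y_n$, never literally the composite $h\circ g$ of two given deflations, so a single application of (EA2$\op$) cannot produce a conflation ``having $h\circ g$ as its $n$-th differential.'' Second, and more seriously, your justification that the resulting extension lies in $\F$ --- ``because it is obtained from the two original $\F$-extensions through the push-forwards and pull-backs under which $\F$ is closed'' --- nowhere uses the hypothesis that $\s_{\F}$-inflations are closed under composition. If that justification were valid, it would show that $\s_{\F}$-deflations are closed under composition for \emph{every} additive subfunctor, hence that every additive subfunctor yields an $n$-exangulated structure; this is false, and ruling it out is the entire point of conditions (2) and (3), going back to the relative theory of Auslander--Solberg \cite{AS}. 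Concretely, when one realizes $h\circ g$ as an $\s$-deflation using the ambient axioms, the extension $\eta$ one obtains is related to the given $\F$-extensions only by identities of the form $e\sas\eta\in\F$ for a suitable morphism $e$; an additive subfunctor is closed under taking push-forwards, not under taking preimages of push-forwards, so $\eta\in\F$ does not follow. Establishing it is the actual mathematical content of \cite[Proposition 3.14]{HLN} and genuinely requires hypothesis (2); you should either supply that argument in full or, as the paper does, simply cite the result.
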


\section{Frobenius $n$-exangulated categories}
In this section, we introduce a
notion of Frobenius $n$-exangulated categories
which are a generalization of Frobenius $n$-exact categories. Moreover, we
prove that the stable category of a Frobenius $n$-exangulated category is an
$(n+2)$-angulated category.

Let $\C$ be  an additive category,
and $\X$ be a subcategory of $\C$.
Recall that we say a morphism $f\colon A \to B$ in $\C$ is an $\X$-\emph{monic} if
$$\Hom_{\C}(f,X)\colon \Hom_{\C}(B,X) \to \Hom_{\C}(A,X)$$
is an epimorphism for all $X\in\X$. We say that $f$ is an $\X$-\emph{epic} if
$$\Hom_{\C}(X,f)\colon \Hom_{\C}(X,A) \to \Hom_{\C}(X,B)$$
is an epimorphism for all $X\in\X$.
Similarly,
we say that $f$ is a left $\X$-approximation of $B$ if $f$ is an $\X$-monic and $A\in\X$.
We say that $f$ is a right $\X$-approximation of $A$ if $f$ is an $\X$-epic and $B\in\X$.

A subcategory $\X$ is called \emph{contravariantly finite} if any object in $\C$ admits a right
$\X$-approximation. Dually we can define  \emph{covariantly finite} subcategory.
A contravariantly finite and covariantly finite subcategory is called \emph{functorially finite}.
\begin{definition}\label{dd1}
Let $(\C,\E,\s)$ be an $n$-exangulated category. A subcategory $\X$ of $\C$ is called
\emph{strongly contravariantly finite}, if for any object $C\in\C$, there exists a distinguished $n$-exangle
$$B\xrightarrow{}X_1\xrightarrow{}X_2\xrightarrow{}\cdots\xrightarrow{}X_{n-1}\xrightarrow{}X_{n}\xrightarrow{~g~}C\overset{}{\dashrightarrow}$$
where $g$ is a right $\X$-approximation of $C$ and $X_i\in\X$.
Dually we can define \emph{strongly covariantly finite} subcategory.

A strongly contravariantly finite and strongly  covariantly finite subcategory is called \emph{ strongly functorially finite}.
\end{definition}

\begin{definition}\label{def2}
Let $(\C,\E,\s)$ be an $n$-exangulated category.
\begin{itemize}
\item[(1)] An object $P\in\C$ is called \emph{projective} if, for any distinguished $n$-exangle
$$A_0\xrightarrow{\alpha_0}A_1\xrightarrow{\alpha_1}A_2\xrightarrow{\alpha_2}\cdots\xrightarrow{\alpha_{n-2}}A_{n-1}
\xrightarrow{\alpha_{n-1}}A_n\xrightarrow{\alpha_n}A_{n+1}\overset{\delta}{\dashrightarrow}$$
and any morphism $c$ in $\C(P,A_{n+1})$, there exists a morphism $b\in\C(P,A_n)$ satisfying $\alpha_n\circ b=c$.
We denote the full subcategory of projective objects in $\C$ by $\P$.
Dually, the full subcategory of injective objects in $\C$ is denoted by $\I$.

\item[(2)] We say that $\C$ {\it has enough  projectives} if
for any object $C\in\C$, there exists a distinguished $n$-exangle
$$B\xrightarrow{\alpha_0}P_1\xrightarrow{\alpha_1}P_2\xrightarrow{\alpha_2}\cdots\xrightarrow{\alpha_{n-2}}P_{n-1}
\xrightarrow{\alpha_{n-1}}P_n\xrightarrow{\alpha_n}C\overset{\delta}{\dashrightarrow}$$
satisfying $P_1,P_2,\cdots,P_n\in\P$. We can define the notion of having \emph{enough injectives} dually.

\item[(3)]  $\C$ is said to be {\it Frobenius} if $\C$ has enough projectives and enough injectives
and if moreover the projectives coincide with the injectives.
\end{itemize}
\end{definition}

\begin{remark}
~\begin{itemize}
\item[\rm (1)]  In the case $n=1$, these agree with
the usual definitions \cite[Definition 3.23, Definition 3.25 and Definition 7.1]{NP}.

\item[\rm (2)] If $(\C,\E,\s)$ is an $n$-exact category, then these agree with \cite[Definition 3.11, Definition 5.3 and Definition 5.5]{J}.

\item[\rm (3)] If $(\C,\E,\s)$ is an $(n+2)$-angulated category, then
$\P=\I$ consists of zero objects. Moreover it always has enough projectives and enough injectives.
\end{itemize}
\end{remark}

\begin{lemma}\label{a5}
Let $(\C,\E,\s)$ be an $n$-exangulated category. Then the
following statements are equivalent for an object $P\in\C$.
\begin{itemize}
\item[\rm (1)] $\E(P,A)=0$ for any $A\in\C$;

\item[\rm (2)] $P$ is projective;

\item[\rm (3)] Any distinguished $n$-exangle $A_0\xrightarrow{\alpha_0}A_1\xrightarrow{\alpha_1}A_2\xrightarrow{\alpha_2}\cdots\xrightarrow{\alpha_{n-2}}A_{n-1}
\xrightarrow{\alpha_{n-1}}A_n\xrightarrow{\alpha_n}P\overset{\delta}{\dashrightarrow}$ splits.
\end{itemize}
\end{lemma}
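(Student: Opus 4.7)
The plan is to prove the cyclic chain of implications $(1)\Rightarrow(2)\Rightarrow(3)\Rightarrow(1)$, with the long exact sequence of Lemma \ref{a1} as the main tool throughout. Following Definition 2.1 and axiom (R2), I interpret ``splits'' in (3) to mean that the underlying extension is the split (zero) extension. This interpretation will in fact fall out naturally from the $(2)\Rightarrow(3)$ step.

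For $(1)\Rightarrow(2)$, I would fix a distinguished $n$-exangle $A_0\xrightarrow{\alpha_0}\cdots\xrightarrow{\alpha_{n-1}}A_n\xrightarrow{\alpha_n}A_{n+1}\overset{\del}{\dashrightarrow}$ and a morphism $c\in\C(P,A_{n+1})$. Evaluating the first exact sequence of Lemma \ref{a1} at $P$ gives the exact three-term piece
\[
\C(P,A_n)\xrightarrow{\alpha_n\circ-}\C(P,A_{n+1})\xrightarrow{\del\ssh}\E(P,A_0).
\]
Since $\E(P,A_0)=0$ by hypothesis, the rightmost map is zero, so $c=\alpha_n\circ b$ for some $b\in\C(P,A_n)$, which is exactly the projectivity condition from Definition \ref{def2}.

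For $(2)\Rightarrow(3)$, given a distinguished $n$-exangle ending in $P=A_{n+1}$ with extension $\del$, I would apply projectivity to $\id_P\in\C(P,P)$ to obtain $b\in\C(P,A_n)$ satisfying $\alpha_n\circ b=\id_P$. Feeding this back into the same exact sequence and using $\del\ssh(\id_P)=\id_P\uas\del=\del$, I get
\[
\del=\del\ssh(\id_P)=\del\ssh(\alpha_n\circ b)=0,
\]
so the $n$-exangle is a realization of the split extension, i.e., it splits.

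For $(3)\Rightarrow(1)$, I would take any $A\in\C$ and any $\del\in\E(P,A)$ and use the exact realization $\s$ to produce a distinguished $n$-exangle realizing $\del$ whose right-hand end is $P$. Hypothesis (3) then forces $\del=0$; since $A$ and $\del$ were arbitrary, $\E(P,-)=0$. The only delicate point is pinning down the meaning of ``splits''; once that is done, the proof is a direct bookkeeping against Lemma \ref{a1}, and I do not anticipate a genuine obstacle.
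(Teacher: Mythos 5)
Your proposal is correct and follows essentially the same route as the paper: $(1)\Rightarrow(2)$ via the long exact sequence of Lemma \ref{a1}, $(2)\Rightarrow(3)$ by producing a section of $\alpha_n$ from projectivity, and $(3)\Rightarrow(1)$ by realizing an arbitrary extension. The only cosmetic difference is that where the paper cites \cite[Claim 2.15]{HLN} to pass between ``$\alpha_n$ has a section'' and ``$\delta=0$'', you derive $\delta=\del\ssh(\alpha_n\circ b)=0$ directly from the exactness of the sequence, which is a perfectly valid inlining of that claim.
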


\proof  (1) $\Rightarrow$ (2).
For any distinguished $n$-exangle
$$A_0\xrightarrow{\alpha_0}A_1\xrightarrow{\alpha_1}A_2\xrightarrow{\alpha_2}\cdots\xrightarrow{\alpha_{n-2}}A_{n-1}
\xrightarrow{\alpha_{n-1}}A_n\xrightarrow{\alpha_n}A_{n+1}\overset{\delta}{\dashrightarrow}$$
and any morphism $c$ in $\C(P,A_{n+1})$, by Lemma \ref{a1}, we have the following exact sequence:
$$\C(P, A_{n+1})\xrightarrow{\C(P,~\alpha_n)}\C(P, A_{n+1})\xrightarrow{}\E(P, A_{0})=0.$$
So there exists a morphism $b\in\C(P,A_n)$ such that $\alpha_n\circ b=c$.
This shows that $P$ is projective.

(2) $\Rightarrow$ (3).  Since $P$ is projective, there exist a morphism $u\colon P\to A_n$
such that $\alpha_nu=1_{P}$. By \cite[Claim 2.15]{HLN}, we have $\delta=0$.
Hence $\delta$ splits.

(3) $\Rightarrow$ (1).  It follows from \cite[Claim 2.15]{HLN}.  \qed
\bigskip

Let $\C$ be an additive category.
For two objects $A,B$ in $\X$ denote by $\X(A,B)$ the subgroup of $\Hom_{\C}(A,B)$ consisting of those morphisms which factor through an object in $\X$. Denote by $\C/\X$ the \emph{quotient category} of $\C$ modulo $\X$: the objects are the same as the ones in $\C$, for two objects $A$ and $B$ the Hom space is given by the quotient group $\Hom_{\C}(A,B)/\X(A,B)$.
Note that the quotient category $\C/\X$ is an additive category. We denote $\overline{f}$ the image of $f\colon A\to B$ of $\C$ in $\C/\X$.
\medskip

From this point on we assume that $(\C,\E,\mathfrak{s})$ is a Frobenius $n$-exangulated category.
We refer to this category $\C/\I$ as the stable category of $\C$.
In keeping the convention of the classical theory, if $(\C,\E,\mathfrak{s})$ is a Frobenius $n$-exangulated category, then we denote its \textbf{stable category} by $\overline{\C}$.
Any object $X\in\C$ admits a distinguished $n$-exangle
$$\xymatrix{X \ar[r]^{i_0^X\;} &I_1^X \ar[r]^{i_1^X\;} \ar[r] &\cdot\cdot\cdot \ar[r]^{i_{n-1}^X} &I_n^X \ar[r]^{i_n^X} \ar[r] &SX \ar@{-->}[r]^-{\del^X} &}$$ where $I_k^X\in \I$ for $k\in \{1,2,...,n \}$, $i_0^X$ is a left $\I$-appromation of $X$ and $i_n^X$ is a right $\I$-appromation of $SX$. We call it a distinguished $\I(X)$-exangle.
\medskip

Our aim is to show that the stable category of a Frobenius $n$-exangulated category, has a natural
structure of an $(n+2)$-angulated category. We begin with the construction of an auto-equivalence
$S\colon\overline{\C}\to\overline{\C}$. We need the following some lemmas.

\begin{lemma}\label{iso}
For a morphism $f\colon X\to Y$, there exists a commutative diagram of a distinguished  $\I(X)$-exangle and a distinguished $\I(Y)$-exangle
$$\xymatrix{
X \ar[r]^{i_0^X} \ar@{}[dr]|{\circlearrowright} \ar[d]_f &I_1^X \ar[r]^{i_1^X} \ar@{}[dr]|{\circlearrowright} \ar[d] &\cdot\cdot\cdot \ar[r]^{i_{n-1}^X} \ar@{}[dr]|{\circlearrowright} &I_n^X \ar[d] \ar@{}[dr]|{\circlearrowright} \ar[r]^{i_n^X} &SX \ar[d]^{Sf} \ar@{-->}[r]^-{\del^X} &\\
Y \ar[r]_{i_0^Y} &I_1^Y \ar[r]_{i_1^Y} &\cdot\cdot\cdot \ar[r]_{i_{n-1}^Y} &I_n^Y \ar[r]_{i_n^Y}  &SY\ar@{-->}[r]^-{\del^Y} &,
}
$$
where the morphism $\overline {Sf}$ is unique in $\overline \C$.
\end{lemma}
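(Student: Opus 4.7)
The plan is to produce $Sf$ as the right-hand endpoint of a lift provided by axiom (R0), where existence of the lift is guaranteed by the long exact sequence from Lemma \ref{a1}; uniqueness in $\overline{\C}$ will then follow from the injectivity of $I_n^Y$.

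To produce a candidate $Sf\colon SX \to SY$, I will apply Lemma \ref{a1} with contravariant variable $SX$ to the distinguished $\I(Y)$-exangle, obtaining the exact sequence
$$\C(SX, I_n^Y) \to \C(SX, SY) \xrightarrow{(\delta^Y)_{\sharp}} \E(SX, Y) \to \E(SX, I_1^Y).$$
Since $I_1^Y$ is injective, the dual of Lemma \ref{a5} gives $\E(SX, I_1^Y) = 0$, so $(\delta^Y)_{\sharp}$ is surjective. Choosing $Sf$ in the preimage of $f_{\ast}\delta^X \in \E(SX, Y)$ ensures $(Sf)^{\ast}\delta^Y = f_{\ast}\delta^X$, so $(f, Sf)$ becomes a morphism of extensions $\delta^X \to \delta^Y$ in the sense of Definition 2.2. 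The realization axiom (R0) then supplies intermediate morphisms $f_1, \ldots, f_n$ such that $(f, f_1, \ldots, f_n, Sf)$ is a morphism of the underlying complexes, which is precisely the commutative diagram claimed.

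For the uniqueness of $\overline{Sf}$, suppose $Sf$ and $Sf'$ are two morphisms $SX \to SY$ that can appear as the right-hand endpoint of such a commutative diagram. By the defining equation of a morphism of extensions, both satisfy $(Sf)^{\ast}\delta^Y = f_{\ast}\delta^X = (Sf')^{\ast}\delta^Y$, so $(Sf - Sf')^{\ast}\delta^Y = 0$. Exactness of the displayed sequence at $\C(SX, SY)$ then places $Sf - Sf'$ in the image of $\C(SX, i_n^Y)$, so it factors through $I_n^Y \in \I$. Hence $\overline{Sf - Sf'} = 0$, i.e.\ $\overline{Sf} = \overline{Sf'}$ in $\overline{\C}$.

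I do not foresee any serious obstacle; the argument is a direct combination of the long exact sequence from Lemma \ref{a1}, axiom (R0), and the characterisation of injectives via Lemma \ref{a5}. The only point requiring care is verifying that the chosen $Sf$ genuinely arises as the endpoint of a morphism of $n$-exangles and not merely as an abstract preimage in $\E(SX, Y)$ — but this is automatic, since the pair $(f, Sf)$ is exactly the input required by (R0) to produce the middle morphisms $f_1, \ldots, f_n$.
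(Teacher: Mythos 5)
Your proof is correct, and the uniqueness half is exactly the paper's argument: both of you observe that two candidates satisfy $(Sf)^{\ast}\del^Y=f_{\ast}\del^X=((Sf)')^{\ast}\del^Y$ and then use exactness of $\C(SX,I_n^Y)\to\C(SX,SY)\xrightarrow{(\del^Y)_{\sharp}}\E(SX,Y)$ from Lemma \ref{a1} to factor the difference through $I_n^Y\in\I$. The existence half is where you diverge. The paper builds the commutative diagram from the left: since $i_0^X$ is a left $\I$-approximation and $I_1^Y\in\I$, the composite $i_0^Y f$ factors through $i_0^X$, and one continues rightwards, with $Sf$ appearing as the last component of the resulting chain map. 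You instead produce $Sf$ first, from the surjectivity of $(\del^Y)_{\sharp}$ --- which you correctly deduce from $\E(SX,I_1^Y)=0$ via the dual of Lemma \ref{a5} and the tail of the long exact sequence --- and then let (R0) supply the intermediate morphisms $f_1,\ldots,f_n$. Your route has the small advantage that the identity $f_{\ast}\del^X=(Sf)^{\ast}\del^Y$, which is precisely what the uniqueness argument and the later constructions (the definition of $\overline{\mathfrak{s}}$, Lemma \ref{naturaliso}) actually use, holds by construction rather than having to be read off from the commutativity of the completed diagram. Both arguments ultimately rest on the same two inputs (injectivity of the terms $I_k$ and the long exact sequence of Lemma \ref{a1}), so the difference is one of bookkeeping rather than substance, and no step of your version fails.
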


\begin{proof}
Since $I_1^X$ and $I_1^Y$ are in $\I$ and $i_0^X$, $i_0^Y$ are left $\I$-approximations, we have the required commutative diagram. If there is another morphism $(Sf)'\colon SX\to SY$ such that $$f_*\del^X=((Sf)')^*\del^Y,$$ we have $Sf-(Sf)'$ factors through $i_n^Y$, which implies $\overline {Sf}=\overline {(Sf)'}$.
\end{proof}

For any object $X$, we fix a distinguished  $\I(X)$-exangle $$\xymatrix{X \ar[r]^{i_0^X} &I_1^X \ar[r]^{i_1^X} \ar[r] &\cdot\cdot\cdot \ar[r]^{i_{n-1}^X} &I_n^X \ar[r]^{i_n^X} \ar[r] &SX \ar@{-->}[r]^-{\del^X} &,}$$ then we have a functor $S$ on $\overline \C$ such that $S\colon X\to SX$ and $S\colon \overline f\to \overline {Sf}$.

Moreover, if for any object $X$, we fix another distinguished  $\I(X)$-exangle $$\xymatrix{X \ar[r]^{{i_0^X}'} &{I_1^X}' \ar[r]^{{i_1^X}'} \ar[r] &\cdot\cdot\cdot \ar[r]^{{i_{n-1}^X}'} &{I_n^X}' \ar[r]^{{i_n^X}'} \ar[r] &S'X \ar@{-->}[r]^-{(\del^X)'} &,}$$ we can define a functor $S'$ in the same way as a functor $S$. We have the following lemma.

\begin{lemma}\label{lemma}
There exists a natural isomorphism $\Phi$ from $S$ to $S'$.
\end{lemma}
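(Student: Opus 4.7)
The plan is to define, for each object $X\in\C$, a morphism $\Phi_X\colon SX\to S'X$ by applying Lemma~\ref{iso} to the identity $\id_X\colon X\to X$, regarded as a morphism from the distinguished $\I(X)$-exangle $(i_0^X,\dots,i_n^X,\del^X)$ that defines $SX$ to the distinguished $\I(X)$-exangle $({i_0^X}{'},\dots,{i_n^X}{'},(\del^X)')$ that defines $S'X$. Lemma~\ref{iso} produces such a $\Phi_X$ and guarantees that its image $\overline{\Phi_X}$ in $\overline{\C}$ is uniquely determined.

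I would first check that each $\overline{\Phi_X}$ is invertible. Running Lemma~\ref{iso} with the roles of the two $\I(X)$-exangles interchanged yields a morphism $\Psi_X\colon S'X\to SX$ lifting $\id_X$. Then $\Psi_X\circ\Phi_X$ and $\id_{SX}$ both fit into commutative ladders between the $\I(X)$-exangle defining $SX$ and itself, lying over $\id_X$, so uniqueness in Lemma~\ref{iso} forces $\overline{\Psi_X\circ\Phi_X}=\overline{\id_{SX}}$; the reverse composition is handled symmetrically, giving $\overline{\Phi_X\circ\Psi_X}=\overline{\id_{S'X}}$.

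Next I would verify naturality. Given $f\colon X\to Y$, I would show that both $\Phi_Y\circ Sf$ and $S'f\circ\Phi_X$ are lifts, in the sense of Lemma~\ref{iso}, of $f$ regarded as a morphism from the $\I(X)$-exangle defining $SX$ to the $\I(Y)$-exangle defining $S'Y$. The construction of $Sf$ gives $f_\ast\del^X=(Sf)^\ast\del^Y$ and the construction of $\Phi_Y$ gives $\del^Y=(\Phi_Y)^\ast(\del^Y)'$, so composing yields $f_\ast\del^X=(\Phi_Y\circ Sf)^\ast(\del^Y)'$; the analogous computation through $\Phi_X$ and $S'f$ yields $f_\ast\del^X=(S'f\circ\Phi_X)^\ast(\del^Y)'$. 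Uniqueness in Lemma~\ref{iso} then forces $\overline{\Phi_Y\circ Sf}=\overline{S'f\circ\Phi_X}$ in $\overline{\C}$, which is exactly the naturality of $\Phi$.

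The main obstacle is the extension-level bookkeeping needed to invoke Lemma~\ref{iso} for naturality: once the two composites are seen to realize the same equation $f_\ast\del^X=(-)^\ast(\del^Y)'$ in $\C$, well-definedness, invertibility, and naturality all reduce to the single uniqueness statement in Lemma~\ref{iso}.
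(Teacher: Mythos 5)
Your proposal is correct and follows essentially the same route as the paper: you define $\Phi_X$ by lifting $\id_X$ between the two chosen distinguished $\I(X)$-exangles and then derive invertibility and naturality from the uniqueness clause of Lemma~\ref{iso}. The paper phrases the naturality step via two stacked commutative ladders rather than your explicit computation $f_\ast\del^X=(\Phi_Y\circ Sf)^\ast(\del^Y)'=(S'f\circ\Phi_X)^\ast(\del^Y)'$, but the underlying argument is identical.
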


\begin{proof}
For any object $X$, we have the following commutative diagram
$$\xymatrix{
X \ar[r]^{i_0^X} \ar@{=}[d] \ar@{}[dr]|{\circlearrowright} &I_1^X \ar[r]^{i_1^X} \ar@{}[dr]|{\circlearrowright} \ar[d] &\cdot\cdot\cdot \ar@{}[dr]|{\circlearrowright} \ar[r]^{i_{n-1}^X} &I_n^X \ar@{}[dr]|{\circlearrowright} \ar[d] \ar[r]^{i_n^X} &SX \ar[d]^{f^X} \ar@{-->}[r]^-{\del^X} &\\
X \ar[r]_{{i_0^X}'} &{I_1^X}' \ar[r]_{{i_1^X}'} &\cdot\cdot\cdot \ar[r]_{{i_{n-1}^X}'} &{I_n^X}' \ar[r]_{{i_n^X}'}  &S'X \ar@{-->}[r]^-{(\del^X)'} &.
}
$$
Denote $\overline {f^X}$ by $\Phi^X$, by Lemma \ref{iso}, we know that $\Phi^X$ is an isomorphism in $\overline \C$.

For any morphism $g:X\to Y$, we have the following two commutative diagrams
$$\xymatrix{
X \ar[r]^{i_0^X} \ar@{=}[d] \ar@{}[dr]|{\circlearrowright} &I_1^X \ar@{}[dr]|{\circlearrowright} \ar[r]^{i_1^X} \ar[d] &\cdot\cdot\cdot \ar@{}[dr]|{\circlearrowright} \ar[r]^{i_{n-1}^X} &I_n^X \ar@{}[dr]|{\circlearrowright} \ar[d] \ar[r]^{i_n^X} &SX \ar[d]^{f^X} \ar@{-->}[r]^-{\del^X} &\\
X \ar[r]^{{i_0^X}'} \ar@{}[dr]|{\circlearrowright} \ar[d]^g &{I_1^X}' \ar@{}[dr]|{\circlearrowright} \ar[d] \ar[r]^{{i_1^X}'}  &\cdot\cdot\cdot \ar@{}[dr]|{\circlearrowright} \ar[r]^{{i_{n-1}^X}'} &{I_n^X}' \ar@{}[dr]|{\circlearrowright} \ar[d] \ar[r]^{{i_n^X}'}  &S'X \ar[d]^{S'g} \ar@{-->}[r]^-{(\del^X)'} &\\
Y \ar[r]_{{i_0^Y}'}  &{I_1^Y}'  \ar[r]_{{i_1^Y}'}  &\cdot\cdot\cdot \ar[r]_{{i_{n-1}^Y}'} &{I_n^Y}'  \ar[r]_{{i_n^Y}'}  &S'Y  \ar@{-->}[r]^-{(\del^Y)'} &
}
$$
and
$$\xymatrix{
X \ar[r]^{i_0^X} \ar@{}[dr]|{\circlearrowright} \ar[d]^g &I_1^X \ar@{}[dr]|{\circlearrowright} \ar[r]^{i_1^X} \ar[d] &\cdot\cdot\cdot \ar@{}[dr]|{\circlearrowright} \ar[r]^{i_{n-1}^X} &I_n^X \ar[d] \ar@{}[dr]|{\circlearrowright} \ar@{->>}[r]^{i_n^X} &SX \ar[d]^{Sg} \ar@{-->}[r]^-{\del^X} &\\
Y \ar[r]^{i_0^Y} \ar@{}[dr]|{\circlearrowright} \ar@{=}[d] &I_1^Y \ar@{}[dr]|{\circlearrowright} \ar[d] \ar[r]^{i_1^Y} &\cdot\cdot\cdot \ar@{}[dr]|{\circlearrowright} \ar[r]^{i_{n-1}^Y} &I_n^Y \ar@{}[dr]|{\circlearrowright} \ar[d] \ar[r]^{i_n^Y}  &SY \ar[d]^{f^Y} \ar@{-->}[r]^-{\del^Y} &\\
Y \ar[r]_{{i_0^Y}'}  &{I_1^Y}'  \ar[r]_{{i_1^Y}'}  &\cdot\cdot\cdot \ar[r]_{{i_{n-1}^Y}'} &{I_n^Y}'  \ar[r]_{{i_n^Y}'}  &S'Y  \ar@{-->}[r]^-{(\del^Y)'} &.
}
$$
By Lemma \ref{iso}, we have $\overline {S'gf^X}=\overline {f^YSg}$, hence we have the following commutative diagram
$$\xymatrix{
SX \ar[r]^{\Phi^X}_{\simeq} \ar@{}[dr]|{\circlearrowright} \ar[d]_{\overline {Sg}} &S'X \ar[d]^{\overline{S'g}}\\
SY \ar[r]_{\Phi^Y}^{\simeq} &S'Y.
}
$$
\end{proof}

Dually, for any object $Y$, we fix a distinguished  $n$-exangle $$\xymatrix{TY \ar[r]^{i_0^Y} &I_1^Y \ar[r]^{i_1^Y} \ar[r] &\cdot\cdot\cdot \ar[r]^{i_{n-1}^Y} &I_n^Y \ar[r]^{i_n^Y} \ar[r] &Y \ar@{-->}[r]^-{\rho^Y} &}$$ where $I_k^Y\in \I$ for $k\in \{1,2,...,n \}$, $i_0^Y$ is a left $\I$-appromation of $TY$ and $i_n^Y$ is a right $\I$-appromation of $Y$.

For a morphism $g\colon Y\to X$, there exists a commutative diagram
$$\xymatrix{
TY \ar[r]^{i_0^Y} \ar[d]_{Tg} \ar@{}[dr]|{\circlearrowright} &I_1^Y \ar@{}[dr]|{\circlearrowright} \ar[r]^{i_1^Y}  \ar[d] &\cdot\cdot\cdot \ar@{}[dr]|{\circlearrowright} \ar[r]^{i_{n-1}^Y} &I_n^Y \ar@{}[dr]|{\circlearrowright} \ar[d] \ar[r]^{i^n_Y} &Y \ar[d]^g \\
TX \ar[r]^{i_0^X}  &I_1^X \ar[r]^{i_1^X}  &\cdot\cdot\cdot \ar[r]^{i_{n-1}^X} &I_n^X \ar[r]^{i_n^X}  &X
}
$$
where  the morphism $\overline {Tg}$ is unique in $\C$ by the dual of Lemma \ref{iso}.

Hence we have a functor $T$ on $\overline \C$ such that $T\colon Y\to TY$ and $T\colon\overline g\to \overline {Tg}$.

By Lemma \ref{lemma}, we know that $S$ is a well-defined functor.
One can easily check that $T$ gives a quasi-inverse of $S$.
In summary, we have the following.

\begin{proposition}
The functor $S\colon\overline \C\to \overline \C$ is an autoequivalence and the functor $T$ is its  quasi-inverse.
\end{proposition}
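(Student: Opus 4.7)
The plan is to produce natural isomorphisms $\eta\colon\id_{\overline{\C}}\Rightarrow TS$ and $\varepsilon\colon ST\Rightarrow\id_{\overline{\C}}$, which together say that $T$ is quasi-inverse to $S$, and to verify along the way that $S$ and $T$ really are functors on $\overline{\C}$. The whole argument rests on a single principle, which is essentially the uniqueness clause of Lemma \ref{iso} and its dual: if $X\to I_1\to\cdots\to I_n\to Z\dashrightarrow$ and $Y\to J_1\to\cdots\to J_n\to Z\dashrightarrow$ are two distinguished $n$-exangles ending at the same object $Z$ with all $I_k,J_k\in\I$, then a lift of $\id_Z$ gives a morphism $X\to Y$ that is uniquely determined in $\overline{\C}$; the dual statement holds for pairs of distinguished $n$-exangles with common first term and injective middle terms.

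First I would verify functoriality of $S$ on $\overline{\C}$: that $\overline{S\id_X}=\overline{\id_{SX}}$ and $\overline{S(gf)}=\overline{Sg}\circ\overline{Sf}$. In each case, both sides of the equation provide lifts of the same morphism of extensions through $n$-exangles whose middle terms are injective, so they coincide in $\overline{\C}$ by the principle above, and the analogous argument (dualized) gives functoriality of $T$. Next I would construct $\eta$: the chosen $\I(X)$-exangle $X\to I_\bullet^X\to SX\dashrightarrow$ is itself a distinguished $n$-exangle ending at $SX$ with injective middle terms, and the fixed choice underlying $T$ gives another such $n$-exangle $TSX\to J_\bullet\to SX\dashrightarrow$. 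Lifting $\id_{SX}$ in each direction produces morphisms $\eta_X\colon X\to TSX$ and $\eta_X'\colon TSX\to X$ in $\overline{\C}$, whose composites lift $\id_{SX}$ and therefore equal the respective identities by uniqueness, so $\eta_X$ is an isomorphism. The construction of $\varepsilon$ is entirely dual, comparing the chosen $n$-exangle $TY\to I_\bullet^Y\to Y\dashrightarrow$ defining $TY$ with the $\I(TY)$-exangle $TY\to K_\bullet^{TY}\to STY\dashrightarrow$ defining $STY$.

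The main obstacle, and the step most closely modelled on Lemma \ref{lemma}, is naturality of $\eta$: for any $f\colon X\to Y$ one must check $\overline{TSf}\circ\eta_X=\eta_Y\circ\overline{f}$ in $\overline{\C}$. I would paste the defining square of $Sf$ on top of the defining square of $T(Sf)$ and observe that both $\overline{TSf}\circ\eta_X$ and $\eta_Y\circ\overline{f}$ lift the morphism of extensions $(f,\id_{SY})$ from $\delta^X$ to $\delta^{SY}$ through two distinguished $n$-exangles with injective middle terms; the uniqueness principle then forces the two composites to agree in $\overline{\C}$. Naturality of $\varepsilon$ follows by the dual diagram chase. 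Once $\eta$ and $\varepsilon$ are established as objectwise isomorphisms and are natural, they exhibit $T$ as a quasi-inverse of $S$, so $S$ is an autoequivalence of $\overline{\C}$ as claimed.
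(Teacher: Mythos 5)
Your proposal is correct and follows essentially the same route as the paper: the paper constructs $S$ and $T$ from fixed distinguished $\I(X)$-exangles and their duals, and leaves the quasi-inverse verification to the reader, which amounts exactly to your uniqueness-of-lifts principle (Lemma \ref{iso}, its dual, and the independence-of-choices argument of Lemma \ref{lemma}). Your explicit construction of the unit and counit via lifts of $\id_{SX}$ through the two distinguished $n$-exangles with injective middle terms ending at $SX$ is precisely the intended filling-in of ``one can easily check.''
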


\begin{lemma}\label{standard}
If we have a distinguished $n$-exangle $$\xymatrix{X_0 \ar[r]^{d_0^X} &X_1 \ar[r]^{d_1^X} \ar[r] &\cdot\cdot\cdot \ar[r]^{d_{n-1}^X} &X_{n} \ar[r]^{d_{n}^X} &X_{n+1} \ar@{-->}[r]^-{\sigma^{X_0}} &}$$
then we have the following commutative diagram
$$\xymatrix{X_0 \ar[r]^{d_0^X} \ar@{}[dr]|{\circlearrowright} \ar@{=}[d] &X_1 \ar@{}[dr]|{\circlearrowright} \ar[r]^{d_1^X} \ar[d]^{f_1} &\cdot\cdot\cdot \ar@{}[dr]|{\circlearrowright} \ar[r]^{d_{n-1}^X} &X_{n} \ar@{}[dr]|{\circlearrowright} \ar[r]^{d_{n}^X} \ar[d]^{f_n} &X_{n+1} \ar[d]^{d_{n+1}^X} \ar@{-->}[r]^{\sigma^{X_0}} &\\
X_0 \ar[r]_{i_0^{X^0}} &I_1^{X^0} \ar[r]_{i_1^{X^0}} \ar[r] &\cdot\cdot\cdot \ar[r]_{i_{n-1}^{X^0}} &I_n^{X^0} \ar[r]_{i_n^{X^0}} \ar[r] &SX_0 \ar@{-->}[r]^-{\delta^{X_0}} &
}
$$
which induces a distinguished $n$-exangle
$$\xymatrix{X_1 \ar[r]^-{\alpha_1} &X_2\oplus I_1^{X_0} \ar[r]^-{\alpha_2} &\cdot\cdot\cdot \ar[r]^-{\alpha_n} &X_{n+1}\oplus I_n^{X^0} \ar[r]^-{\alpha_{n+1}} \ar[r] &SX_0\ar@{-->}[rr]^-{(d_0^X)_*\del^{X_0}} &&,}$$
where $\alpha_1=\begin{bmatrix}-d^X_1\\ f_1\end{bmatrix},$
$\alpha_j=\begin{bmatrix}-d^X_{j}&0\\ f_{j}&i^{X^0}_{j-1}\end{bmatrix}\ (2\leq j\leq n),$
$\alpha_{n+1}=\begin{bmatrix}d_{n+1}^{X}&i^{X_0}_n\end{bmatrix}$.
\end{lemma}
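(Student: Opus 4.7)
The plan is to obtain both the commutative diagram and the induced distinguished $n$-exangle from a single application of axiom (EA2). The input needed for (EA2) is a morphism of extensions $(\id_{X_0}, c)\colon\sigma^{X_0}\to\del^{X_0}$, so I first need to construct a morphism $c = d_{n+1}^X\colon X_{n+1}\to SX_0$ satisfying $(d_{n+1}^X)\uas\del^{X_0} = \sigma^{X_0}$. The commutative diagram will then be the good lift provided by (EA2), and the desired distinguished $n$-exangle will be its mapping cone.

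To produce $d_{n+1}^X$, I would apply Lemma \ref{a1} to the distinguished $\I(X_0)$-exangle and evaluate the resulting long exact sequence at $X_{n+1}$, obtaining
\[
\C(X_{n+1}, SX_0)\xrightarrow{(\del^{X_0})\ssh}\E(X_{n+1},X_0)\lra\E(X_{n+1},I_1^{X_0}).
\]
Because $I_1^{X_0}\in\I$ is injective, the dual of Lemma \ref{a5} gives $\E(X_{n+1},I_1^{X_0}) = 0$. Hence $(\del^{X_0})\ssh$ is surjective, and I may pick $d_{n+1}^X\colon X_{n+1}\to SX_0$ with $(d_{n+1}^X)\uas\del^{X_0} = \sigma^{X_0}$. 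This is the only place where the injectivity of the $I_j^{X_0}$ is genuinely used.

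Now I invoke (EA2) with $\rho := \del^{X_0}\in\E(SX_0,X_0)$ and $c := d_{n+1}^X$. By construction $c\uas\rho = \sigma^{X_0}$, so the top row realizes $c\uas\rho$ and the distinguished $\I(X_0)$-exangle realizes $\rho$. Axiom (EA2) then yields a \emph{good lift} $f^{\mr} = (\id_{X_0}, f_1, \ldots, f_n, d_{n+1}^X)$, which is precisely the commutative diagram displayed in the lemma. The good-lift property asserts moreover that $\langle M_f^{\mr}, (d_0^X)\sas\del^{X_0}\rangle$ is a distinguished $n$-exangle. Substituting $d_i^Y = i_i^{X_0}$ and $f_{n+1} = d_{n+1}^X$ into the mapping-cone formulas of Definition 2.27 recovers exactly the differentials $\alpha_1,\ldots,\alpha_{n+1}$ written in the statement, completing the identification.

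I expect the main obstacle to be purely the construction of $d_{n+1}^X$: once it is in hand, (EA2) delivers the filled right-hand column of the diagram and the induced distinguished $n$-exangle simultaneously, and the remaining verification is only a matter of reading off matrix entries. The essential ingredient making the lift exist is the vanishing $\E(-, I_1^{X_0}) = 0$, a direct consequence of $I_1^{X_0}$ being injective.
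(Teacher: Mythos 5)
Your proposal is correct and follows essentially the same route as the paper, whose proof is the one-line remark that the lemma ``follows from the definition of injective objects and (EA2)'': you use the injectivity of $I_1^{X_0}$ (via Lemma \ref{a1} and the vanishing $\E(X_{n+1},I_1^{X_0})=0$) to produce $d_{n+1}^X$ with $(d_{n+1}^X)^{\ast}\delta^{X_0}=\sigma^{X_0}$, and then (EA2) supplies the good lift and its mapping cone. The identification of the mapping-cone differentials with the stated $\alpha_j$ is also correct.
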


\proof This follows from the definition of injective objects and (EA2). \qed
\medskip

For $\overline \C$, let $\overline \E: \overline {\C}^{\op} \times \overline \C\to \Ab$ and $\overline {\mathfrak{s}}$ be the bifunctor given by
\begin{itemize}
\item $\overline \E(C,A)=\E(C,A)$, $\forall A,C\in \C$.
\item $\overline \E(\overline c, \overline a)=\E(c,a)$, $\forall a\in \Hom_{\C}(A,A'),c\in \Hom_{\C}(C,C')$.
\item For any $\overline \E$-extension $\delta\in \overline \E(C,A)=\E(C,A)$, define
$$\overline {\mathfrak{s}}(\delta)=\overline {\mathfrak{s}(\delta)}=[\xymatrix{A \ar[r]^{\overline {d_0^X}} &X_1 \ar[r]^{\overline {d_1^X}} \ar[r] &\cdot\cdot\cdot \ar[r]^{\overline {d_{n-1}^X}} &X_{n} \ar[r]^{\overline {d_{n}^X}} &C \ar@{-->}[r]^-{\delta} &].}$$
\end{itemize}

Under this setting, any extension $\delta\in \E(C,A)$ indunces natural transformations

$$\overline {\delta_{\sharp}}:\overline {\C}(-,C)\to \E(-,A) \text{ and }\overline {\delta^{\sharp}}:\overline {\C}(A,-)\to \E(C,-).$$

For any $X\in \C$, we have
\begin{itemize}
\item $(\overline {\delta_{\sharp}})_{X}:\overline {\C}(X,C)\to \E(X,A): \overline f\mapsto f^*\delta$.
\item $\overline {\delta^{\sharp}_{X}}:\overline {\C}(A,X)\to \E(C,X): \overline g\mapsto g_*\delta$
\end{itemize}

\begin{lemma}\label{conflation}
Any morphism $d_{n+1}^X\colon X_{n+1}\to X_0$ admits the following commutative diagram
$$\xymatrix{TX
_0 \ar[r]^{d_0^X} \ar@{}[dr]|{\circlearrowright} \ar@{=}[d] &X_1 \ar@{}[dr]|{\circlearrowright} \ar[r]^{d_1^X} \ar[d]^{f_1} &\cdot\cdot\cdot \ar@{}[dr]|{\circlearrowright} \ar[r]^{d_{n-1}^X} &X_{n} \ar@{}[dr]|{\circlearrowright} \ar[r]^{d_{n}^X} \ar[d]^{f_n} &X_{n+1} \ar[d]^{d_{n+1}^X} \ar@{-->}[r]^-{(d_{n+1}^X)^* \delta} &\\
TX_0 \ar[r]_{i_0^{X^0}} &I_1^{X_0} \ar[r]_{i_1^{X_0}} \ar[r] &\cdot\cdot\cdot \ar[r]_{i_{n-1}^{X_0}} &I_n^{X_0} \ar[r]_{i_n^{X_0}} \ar[r] &X_0 \ar@{-->}[r]^-{\delta} &
}
$$
which induces an $(n+2)$-angle
$$\xymatrix{X_1 \ar[r]^-{\alpha_1} &X_2\oplus I_1^{X_0} \ar[r]^-{\alpha_2} &\cdot\cdot\cdot \ar[r]^-{\alpha_n} &X_{n+1}\oplus I_n^{X_0} \ar[r]^-{\alpha_{n+1}} \ar[r] &X_0\ar@{-->}[r]^-{(d_0^X)_*\del} &&}$$
where $\alpha_1=\begin{bmatrix}-d^X_1\\ f_1\end{bmatrix},$
$\alpha_j=\begin{bmatrix}-d^X_{j}&0\\ f_{j}&i^{X^0}_{j-1}\end{bmatrix}\ (2\leq j\leq n),$
$\alpha_{n+1}=\begin{bmatrix}d_{n+1}^{X}&i^{X_0}_n\end{bmatrix}$.
\end{lemma}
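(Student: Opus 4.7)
The plan is to build the top row of the diagram by realizing a pullback extension and then to invoke axiom (EA2) to obtain the required distinguished $n$-exangle as a mapping cone.

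Starting with the fixed distinguished $\I(X_0)$-exangle
$$TX_0 \xrightarrow{i_0^{X_0}} I_1^{X_0} \xrightarrow{i_1^{X_0}} \cdots \xrightarrow{i_{n-1}^{X_0}} I_n^{X_0} \xrightarrow{i_n^{X_0}} X_0 \overset{\delta}{\dashrightarrow},$$
I pull back $\delta$ along the given morphism $d_{n+1}^X\colon X_{n+1}\to X_0$ to obtain the extension $(d_{n+1}^X)^{\ast}\delta \in \E(X_{n+1}, TX_0)$. Applying the exact realization $\s$ to this extension yields a distinguished $n$-exangle
$$TX_0 \xrightarrow{d_0^X} X_1 \xrightarrow{d_1^X} \cdots \xrightarrow{d_{n-1}^X} X_n \xrightarrow{d_n^X} X_{n+1} \overset{(d_{n+1}^X)^{\ast}\delta}{\dashrightarrow},$$
whose intermediate terms $X_1,\ldots,X_n$ are exactly the objects appearing in the top row of the diagram.

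By the very definition of a pullback extension, the pair $(\id_{TX_0}, d_{n+1}^X)$ is a morphism of $\E$-extensions from $(d_{n+1}^X)^{\ast}\delta$ to $\delta$. I then invoke axiom (EA2), applied with $A=TX_0$, $C=X_{n+1}$, $D=X_0$, $c=d_{n+1}^X$ and $\rho=\delta$; this supplies a \emph{good} lift $f^{\mr} = (\id_{TX_0}, f_1, \ldots, f_n, d_{n+1}^X)$. The components $f_1, \ldots, f_n$ populate the vertical arrows of the desired commutative diagram, and by the conclusion of (EA2) the mapping cone $\langle M_f^{\mr}, (d_0^X)_{\ast}\delta\rangle$ is itself a distinguished $n$-exangle.

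Finally, unfolding Definition 2.27, the complex $M_f^{\mr}$ is precisely
$$X_1 \xrightarrow{\alpha_1} X_2\oplus I_1^{X_0} \xrightarrow{\alpha_2} \cdots \xrightarrow{\alpha_n} X_{n+1}\oplus I_n^{X_0} \xrightarrow{\alpha_{n+1}} X_0,$$
with the $\alpha_i$ of the stated block-matrix form and realized extension $(d_0^X)_{\ast}\delta$; this is the required distinguished $n$-exangle, which will descend to the desired $(n+2)$-angle in $\overline{\C}$. I expect no real obstacle: the argument is a direct dual of Lemma \ref{standard}, the only non-routine step being the invocation of (EA2) to guarantee that the lift can be chosen so that its mapping cone is itself distinguished.
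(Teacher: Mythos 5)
Your proposal is correct and follows exactly the argument the paper intends: the paper offers no written proof for this lemma, and its mirror statement (Lemma \ref{standard}) is justified in one line by appealing to (EA2). Your route --- realize the pullback extension $(d_{n+1}^X)^{\ast}\delta$, observe that $(\id_{TX_0},d_{n+1}^X)$ is by construction a morphism of extensions into $\delta$, and invoke (EA2) to obtain a good lift whose mapping cone is the stated distinguished $n$-exangle with the block-matrix differentials of Definition 2.27 --- is precisely that argument.
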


\begin{remark}\label{rem}
Note that $\overline{d_{n+1}^X}=\overline{\alpha_{n+1}}$. Thus Lemma \ref{conflation} implies that deflations are closed under composition in $\overline{\C}$.  Dually, the dual of Lemma \ref{conflation} implies the composition-closedness of deflations $\overline{\C}$.
\end{remark}


\begin{lemma}\label{naturaliso}
We have $ \overline {\C}(X,S(-))\simeq \E(X,-)$ and $ \overline {\C}(-,SX) \simeq \E(-,X)$.
\end{lemma}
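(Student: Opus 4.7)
The plan is to construct a single bifunctorial isomorphism $\Psi\colon\overline{\C}(-,S-)\Rightarrow\E(-,-)$, from which both asserted isomorphisms follow by specializing either variable. For $X,Y\in\C$, I will send $\overline{f}\in\overline{\C}(X,SY)$ to $f^*\delta^Y=(\delta^Y)_\sharp(f)\in\E(X,Y)$, where $\delta^Y$ is the extension of the fixed distinguished $\I(Y)$-exangle. To see this is well defined modulo the ideal $\I(X,SY)$, suppose $f=g\circ h$ with $h\colon X\to I$ and $g\colon I\to SY$, where $I$ is injective. Then $I$ is also projective by the Frobenius hypothesis, so Lemma \ref{a5} yields $\E(I,Y)=0$, giving $g^*\delta^Y=0$ and hence $f^*\delta^Y=h^*g^*\delta^Y=0$.

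For bijectivity, apply Lemma \ref{a1} to the $\I(Y)$-exangle to obtain the exact sequence
$$\C(X,I_n^Y)\xrightarrow{(i_n^Y)_*}\C(X,SY)\xrightarrow{(\delta^Y)_\sharp}\E(X,Y)\xrightarrow{(i_0^Y)_*}\E(X,I_1^Y).$$
Since $I_1^Y$ is injective, the rightmost term vanishes, so $(\delta^Y)_\sharp$ is surjective. Its kernel is the image of $(i_n^Y)_*$, and I claim this image equals $\I(X,SY)$: one inclusion is immediate since $I_n^Y\in\I$, while for the other, any factorization $f\colon X\xrightarrow{h}I\xrightarrow{g}SY$ through an injective $I$ can be rerouted through $I_n^Y$, because the projectivity of $I$ lifts $g$ along $i_n^Y$ to some $g'\colon I\to I_n^Y$ with $i_n^Y g'=g$, so $f=i_n^Y\circ(g'h)$ lies in the image of $(i_n^Y)_*$. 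Hence $(\delta^Y)_\sharp$ descends to a bijection $\Psi_{X,Y}\colon\overline{\C}(X,SY)\to\E(X,Y)$.

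Naturality in $X$ is inherited directly from the fact that $(\delta^Y)_\sharp$ is already a natural transformation on $\C$. For naturality in $Y$, given $g\colon Y\to Y'$, the construction of $Sg$ in Lemma \ref{iso} built in the relation $g_*\delta^Y=(Sg)^*\delta^{Y'}$, so for any $\overline{f}\colon X\to SY$,
$$g_*(f^*\delta^Y)=f^*(g_*\delta^Y)=f^*((Sg)^*\delta^{Y'})=(Sg\circ f)^*\delta^{Y'},$$
which is precisely the required compatibility. Specializing one variable at a time then delivers both claimed isomorphisms. The one nontrivial point is the identification $\ker(\delta^Y)_\sharp=\I(X,SY)$, which crucially uses the Frobenius condition to funnel an arbitrary injective factorization through the specific deflation $i_n^Y$.
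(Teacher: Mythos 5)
Your proposal is correct and follows essentially the same route as the paper: the isomorphism is the same assignment $\overline{f}\mapsto f^{*}\delta^{Y}$, and your two naturality checks are exactly the commutative squares the paper verifies via Lemma \ref{iso}. The paper leaves the bijectivity implicit (it merely decorates the arrows with $\simeq$), whereas you supply the full argument --- surjectivity from $\E(X,I_1^Y)=0$ and the identification $\ker\bigl((\delta^Y)_\sharp\bigr)=\I(X,SY)$ by lifting an arbitrary injective factorization through $i_n^Y$ using the Frobenius hypothesis --- which is precisely the intended justification.
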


\begin{proof}
It is not hard to check that the following diagrams
$$\xymatrix{
\overline {\C}(X,SX_0) \ar[r]^-{\overline {\delta^{\sharp}_{X}}}_\simeq \ar[d]_{\overline \C(X,Sf)} &\E(X,X_0) \ar[d]^{f_*}\\
\overline {\C}(X,SY_0)\ar[r]^-{\overline {\delta^{\sharp}_{Y}}}_\simeq &\E(X,Y_0)\\
} \quad
\xymatrix{\overline {\C}(Y_0,SX)\ar[r]^-{(\overline {\delta_{\sharp}})_{Y_0}}_\simeq \ar[d]_{\overline \C(f,SX)} &\E(Y_0,X) \ar[d]^{f^*}\\
\overline {\C}(X_0,SX)\ar[r]^-{(\overline {\delta_{\sharp}})_{X_0}}_\simeq  &\E(X_0,X)
}
$$
commute for any $X_0, Y_0\in \C$ by Lemma \ref{iso}.
\end{proof}

\begin{lemma} Let $(\C,\E,\mathfrak{s})$ be a Frobenius $n$-exangulated category. Then
$(\overline {\C},\overline{\E},\overline{\mathfrak{s}})$ is an $n$-exangulated category.
\end{lemma}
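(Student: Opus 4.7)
The plan is to verify in turn that $\overline{\E}$ is a well-defined additive bifunctor, that $\overline{\mathfrak{s}}$ is a well-defined exact realization (conditions (R0), (R1), (R2)), and finally the composition axioms (EA1), (EA2) and (EA2$\op$). The key external inputs will be Lemmas \ref{a5}, \ref{a1}, \ref{naturaliso}, \ref{conflation}, and Remark \ref{rem}.

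First I would show that $\overline{\E}$ descends to $\overline{\C}\op\times\overline{\C}$. The key observation is that if $a\in\C(A,A')$ factors as $a=a_2a_1$ with $a_1\co A\to I$ and $I\in\I$, then for any $\delta\in\E(C,A)$ we have $(a_1)\sas\delta\in\E(C,I)=0$ by the dual of Lemma \ref{a5}, hence $a\sas\delta=0$; dually, if $c$ factors through a projective, then $c\uas\delta=0$. Consequently $\E(c,a)$ depends only on the classes $\overline{a}$ and $\overline{c}$, so $\overline{\E}$ is a well-defined additive bifunctor on the quotient, and $\overline{\mathfrak{s}}(\delta)=\overline{\mathfrak{s}(\delta)}$ is well defined as a homotopy equivalence class in $\overline{\C}$. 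Conditions (R0) and (R2) then follow immediately: a morphism $(\overline{a},\overline{c})\co\delta\to\rho$ of $\overline{\E}$-extensions is by definition a pair with $a\sas\delta=c\uas\rho$ in $\E$, so (R0) applied inside $\C$ furnishes a lift $(a,f_1,\ldots,f_n,c)$ whose image in $\overline{\C}$ is the required lift, while (R2) is inherited by taking classes of identity morphisms.

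The central step is (R1). Given a distinguished $n$-exangle $\Xd$, Lemma \ref{a1} supplies the two long exact sequences of $\Hom$ and $\E$-groups in $\C$. Using the natural isomorphism $\overline{\C}(-,SX_0)\simeq\E(-,X_0)$ of Lemma \ref{naturaliso}, I would identify the terminal $\overline{\E}$-term with a representable functor on $\overline{\C}$. Exactness at $\overline{\C}(Y,X_{n+1})$ is then direct: if $h\uas\delta=0$ in $\overline{\E}=\E$, exactness in $\C$ yields $h=d^X_n\circ h'$, hence $\overline{h}=\overline{d^X_n}\,\overline{h'}$. For the intermediate positions one argues by diagram chase: if $d^X_k\circ h$ factors through some $J\in\I$, use injectivity of $I_k^{X_0}$ together with the fixed distinguished $\I(X_0)$-exangle to adjust $h$ to a representative $h''\in\overline{h}$ lying in the image of $d^X_{k-1}$ in $\C$. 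The dual argument produces the contravariant sequence.

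Finally, (EA1) is exactly Remark \ref{rem}: Lemma \ref{conflation} and its dual show that inflations and deflations in $\overline{\C}$ are closed under composition. For (EA2), given the hypothesis data in $\overline{\C}$, I would choose representatives realizing the distinguished $n$-exangles in $\C$, apply (EA2) in $\C$ to obtain a good lift $f^{\mr}$ whose mapping cone $M_f^{\mr}$ realizes $(d^X_0)\sas\rho$, and project back to $\overline{\C}$; (EA2$\op$) is symmetric. The main obstacle I expect is (R1): one must carefully track which representatives in $\C$ of morphisms vanishing in $\overline{\C}$ factor through injectives, and use the fixed distinguished $\I(X_0)$-exangle to repair them into the image of $\overline{d^X_{k-1}}$, which is precisely the bookkeeping that the proof must carry out term by term.
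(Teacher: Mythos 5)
Your overall architecture matches the paper's: well-definedness of $\overline{\E}$ on the quotient via $\E(C,I)=0$ for $I\in\I$ (the paper leaves this implicit, so making it explicit is a small plus), (R0) and (R2) by lifting to and projecting from $\C$, (EA1) from Lemma \ref{conflation} and Remark \ref{rem}, and (EA2) by choosing representatives, taking a good lift in $\C$ and passing its mapping cone to $\overline{\C}$. The genuine gap is in your treatment of (R1) at the intermediate positions. Your argument at the end term is correct, and so is the position adjacent to it: if $\overline{f_1\circ d_0^X}=0$, say $f_1 d_0^X=i'i$ with $i\co X_0\to I$ and $I\in\I$, then $i$ extends along $d_0^X$ precisely because $d_0^X$ is an \emph{inflation} and $\E(X_{n+1},I)=0$, so after subtracting a map that vanishes in $\overline{\C}$ you land in the image of $d_1^X$. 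But this extension step is exactly what fails for $k\ge 1$: the intermediate differentials $d_k^X$ are not inflations, and the only exactness available, $\C(X_{k+1},J)\to\C(X_k,J)\to\C(X_{k-1},J)$, lets you extend a map $j\co X_k\to J$ along $d_k^X$ only when $j\circ d_{k-1}^X=0$ on the nose, which your hypothesis does not give. So ``use injectivity of $I_k^{X_0}$ together with the fixed $\I(X_0)$-exangle to adjust $h$'' does not go through as stated.

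The missing ingredient is the rotation of Lemma \ref{standard}: splicing the given distinguished $n$-exangle with the fixed $\I(X_0)$-exangle via (EA2) yields a new distinguished $n$-exangle $X_1\to X_2\oplus I_1^{X_0}\to\cdots\to X_{n+1}\oplus I_n^{X_0}\to SX_0$ whose initial morphism becomes $-\overline{d_1^X}$ in $\overline{\C}$ (the $I_j^{X_0}$ components die in the quotient). Exactness of the original sequence at position $k+1$ is then exactness of the rotated sequence at position $k$, so the two-step argument you already have propagates inductively to all positions; the contravariant sequence is handled by the dual rotation, using that projectives coincide with injectives. Without this device (or an equivalent one) the core of (R1) is not established.
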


\begin{proof}
(I) We first check that if
$$\xymatrix{A \ar[r]^{\overline {d_0^X}} &X_1 \ar[r]^{\overline {d_1^X}} \ar[r] &\cdot\cdot\cdot \ar[r]^{\overline {d_{n-1}^X}} &X_{n} \ar[r]^{\overline {d_{n}^X}} &C \ar@{-->}[r]^-{\delta} &}$$
is the image of a distinguished $n$-exangle in $(\C,\E,\mathfrak{s})$, then itself is a distinguished $n$-exangle in $(\overline \C,\overline {\E},\overline {\mathfrak{s}})$.
We check  that the following sequence
$$\overline {\C}(C,-) \xrightarrow{\overline {\C}(\overline {d_n^X},-)} \cdot\cdot\cdot \xrightarrow{\overline {\C}(\overline {d_0^X},-)}\overline {\C}(A,-)\xrightarrow{\overline {\delta^{\sharp}}} \overline {\E}(C,-) $$
is exact.

If we have a morphism $f:A\to X$ such that $\overline {\delta^{\sharp}_{X}}(\overline f)=f_*\delta$. Then $f$ factors through $d_n^X$. If we have $\overline f$ factors through $\overline {d_{0}^X}$, then there is a diagram
$$\xymatrix{
A \ar[r]^{d_0^X} \ar[dr]^{f} \ar[d]_{i} &X_1 \ar[d]^{g}\\
I \ar[r]_{i'} &X
}
$$
where $I\in \mathcal I$ and $f-gd_0^X=i'i$. But $i$ also factors through $d_0^X$, which implies $f$ factors through $d_0^X$, hence $\overline {\delta^{\sharp}_{X}}(\overline f)=0=f_*\delta$.

If we have a morphism $f_1\colon X_1\to X$ such that $\overline {f_1d_0^X}=0$, then there is a commutative diagram
$$\xymatrix{
A \ar[r]^{d_0^X} \ar[d]_{i_1} &X_1 \ar[d]^{f_1}\\
I_1 \ar[r]_{i_1'} &X
}
$$
where $I_1\in \mathcal I$. Since $i_1$ factors through $d_0^X$, we get $\overline {f_1}$ factors through $\overline {d_{1}^X}$.

If $\overline {f_1}$ factors through $\overline {d_1^X}$, since $\overline {d_1^Xd_0^X}=0$, we have $\overline {f_1d_0^X}=0$.
By Lemma \ref{standard}, we get a distinguished $n$-exangle
$$\xymatrix{X_1 \ar[r]^{-\overline {d_1^X}} \ar[r] &\cdot\cdot\cdot \ar[r]^{-\overline {d_{n-1}^X}} &X_{n} \ar[r]^{-\overline {d_{n}^X}} &C \ar[r]^{\overline {d_{n+1}^X}} &SA \ar@{-->}[r]^-{(d_0^X)_*\delta} &}$$
hence we can get the exactness of $\overline {\C}(X_3,-) \xrightarrow{\overline {\C}(\overline {d_2^X},-)}\overline {\C}(X_2,-)\xrightarrow{\overline {\C}(\overline {d_1^X},-)} \overline {\C}(X_1,-)$ by the similar argument as above.

The distinguished $n$-exangles in $(\overline {\C},\overline{\E},\overline{\mathfrak{s}})$  are the image of distinguished $n$-exangles in $(\C,\E,\mathfrak{s})$.

(II) We check the axioms of $n$-exangulated category.

(EA1) This follows from Lemma \ref{conflation} and Remark \ref{rem}.

(EA2) For $\rho\in \overline {\E}(D,A)$ and $\overline c\in \overline {\C}(C,D)$, assume that ${_A}\langle \overline X^{\centerdot}, \overline c^*\rho \rangle_C$ and ${_A}\langle \overline Y^{\centerdot}, \rho \rangle_D$ are two distinguished $n$-exangles in $(\overline {\C},\overline{\E},\overline{\mathfrak{s}})$, then we have a good lift of $(1_A, c)$: $f^{\centerdot}$ in $(\C,\E,\mathfrak{s})$
$$\xymatrix{
A \ar[r]^{x_0} \ar@{}[dr]|{\circlearrowright} \ar@{=}[d] &X_1 \ar[r]^{x_1} \ar@{}[dr]|{\circlearrowright} \ar[d] &\cdot\cdot\cdot \ar[r]^{x_{n-1}} \ar@{}[dr]|{\circlearrowright} &X_n \ar[d] \ar@{}[dr]|{\circlearrowright} \ar[r]^{x_n} &C \ar[d]^{c} \ar@{-->}[r]^-{c^*\rho} &\\
A \ar[r]_{y_0} &Y_1 \ar[r]_{y_1} &\cdot\cdot\cdot \ar[r]_{y_{n-1}} &Y_n \ar[r]_{y_n}  &D \ar@{-->}[r]^-{\rho} &
}
$$
which gives a distinguished $n$-exangle $\langle M^{\centerdot}_f, (d_0^X)_*\rho \rangle$. Hence $(\overline{1_A}, \overline{c})$ has a good lift $\overline {f^{\centerdot}}$ in $(\overline {\C},\overline{\E},\overline{\mathfrak{s}})$ giving a distinguished $n$-exangle $\langle \overline M^{\centerdot}_f, (\overline {d^0_X})_*\rho \rangle$.
\end{proof}

By the previous lemmas, we may let $\E_S(-,-)=\overline \C(-,S(-))\simeq \E(-,-)$. For any distinguished $n$-exangle
$$\xymatrix{A \ar[r]^{\overline {d_0^X}} &X_1 \ar[r]^{\overline {d_1^X}} \ar[r] &\cdot\cdot\cdot \ar[r]^{\overline {d_{n-1}^X}} &X_{n} \ar[r]^{\overline {d_{n}^X}} &C \ar@{-->}[r]^-{\delta} &}$$
in $\overline \C$, since there is a one-to-one correspondence between $\E(C,A)$ and $\overline {\C}(C,SA)$, it induces a
$$\xymatrix{A \ar[r]^{\overline {d_0^X}} &X^1 \ar[r]^{\overline {d_1^X}} \ar[r] &\cdot\cdot\cdot \ar[r]^{\overline {d_{n-1}^X}} &X_{n} \ar[r]^{\overline {d_{n}^X}} &C \ar[r]^-{\overline {d_{n+1}^X}} &SA}$$
We call such sequence an $(n+2)$-$S$-sequence. It is not hard to check the exactness of such sequence. Let $\square_{\overline{\mathfrak{s}}}$ be the class of $(n+2)$-$S$-sequences. Then by \cite[Proposition 4.8]{HLN}, we have the following our first main result:

\begin{theorem}\label{main1} Let $(\C,\E,\mathfrak{s})$ be a Frobenius $n$-exangulated category.
Then $(\overline \C,S, \square_{\overline{\mathfrak{s}}})$ is an $(n+2)$-angulated category.
\end{theorem}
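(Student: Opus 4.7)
The strategy is to deduce Theorem \ref{main1} directly from \cite[Proposition 4.8]{HLN}, which asserts that an $n$-exangulated category whose extension bifunctor is corepresented by an autoequivalence carries a canonical $(n+2)$-angulated structure, with the $(n+2)$-angles obtained by appending to each distinguished $n$-exangle the morphism corresponding to its defining extension under the representability isomorphism. So the task reduces to verifying the hypotheses of that proposition and checking that $\square_{\overline{\mathfrak{s}}}$ is the class of $(n+2)$-angles it produces.

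The plan is to assemble the three ingredients already built up in this section: (i) the $n$-exangulated structure $(\overline{\C}, \overline{\E}, \overline{\mathfrak{s}})$ on the stable category, established in the lemma immediately preceding the theorem; (ii) the autoequivalence $S \colon \overline{\C} \to \overline{\C}$ with quasi-inverse $T$, from the proposition just above Lemma \ref{standard}; and (iii) the natural isomorphism $\overline{\E}(-, -) \cong \overline{\C}(-, S-)$ from Lemma \ref{naturaliso}. With these in place, every $\overline{\mathfrak{s}}$-distinguished $n$-exangle $A \to X_1 \to \cdots \to X_n \to C \overset{\delta}{\dashrightarrow}$ yields a unique $(n+2)$-term $S$-sequence by replacing the broken arrow by the morphism $\overline{d_{n+1}^X} \colon C \to SA$ corresponding to $\delta$, and this is precisely how $\square_{\overline{\mathfrak{s}}}$ was defined. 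The exactness of such sequences under $\overline{\C}(-, Z)$ and $\overline{\C}(Z, -)$ follows from the defining exactness of $n$-exangles in Definition 2.13 combined with (iii).

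The main work hidden inside the invocation of \cite[Proposition 4.8]{HLN} is the verification of the rotation axiom and of the $(n+2)$-angulated analogue of the octahedral axiom. Rotation is handled by Lemma \ref{conflation}: it exhibits the rotated $(n+2)$-$S$-sequence as the one arising from the distinguished $n$-exangle with extension $(d_0^X)_* \delta$, and its compatibility with the autoequivalence structure follows from the fact that Lemma \ref{conflation} uses the same fixed distinguished $\I(A)$-exangle used to define $S$. The octahedral-type axiom is extracted from (EA2) and (EA2$^{\mathrm{op}}$) via the mapping-cone construction of Definition 2.27. The potentially subtle point, and what I expect to be the main obstacle, is the rotation step: one must confirm that $\overline{d_{n+1}^X}$ produced by Lemma \ref{conflation} really agrees, up to the equivalence in $\overline{\C}$, with the image of $\delta$ under $\overline{\delta^\sharp}$ of Lemma \ref{naturaliso}. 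This is ensured by the uniqueness clause in Lemma \ref{iso}, so the axioms follow once one tracks that both constructions are pinned down by the same fixed distinguished $\I(A)$-exangle. Hence the proof reduces to verifying hypotheses (i)--(iii) and citing \cite[Proposition 4.8]{HLN}.
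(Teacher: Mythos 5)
Your proposal is correct and follows essentially the same route as the paper: both reduce the theorem to \cite[Proposition 4.8]{HLN} by assembling the $n$-exangulated structure $(\overline{\C},\overline{\E},\overline{\mathfrak{s}})$, the autoequivalence $S$, and the natural isomorphism $\overline{\C}(-,S(-))\simeq\E(-,-)$ of Lemma \ref{naturaliso}, and by identifying $\square_{\overline{\mathfrak{s}}}$ with the resulting class of $(n+2)$-$S$-sequences. Your additional remarks on rotation via Lemma \ref{conflation} and on the compatibility of $\overline{d_{n+1}^X}$ with the fixed $\I$-exangles are consistent with, though not spelled out in, the paper's brief argument.
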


Since any Frobenius $n$-exact category can be viewed as a Frobenius $n$-exangulated category, then
this generalizes a result by Jasso \cite[Theorem 5.11]{J}.
After completing this work, we found that Zheng and Wei \cite{ZW} gave another proof method about this result.

\section{Some new Frobenius $n$-exangulated categories}

Let $(\C,\E,\s)$ be an $n$-exangulated category and $\F\subseteq\E$ an additive subfunctor.
A distinguished $n$-exangle
$$A_0\xrightarrow{}A_1\xrightarrow{}A_2\xrightarrow{}\cdots\xrightarrow{}A_{n-1}
\xrightarrow{}A_n\xrightarrow{}A_{n+1}\overset{\delta}{\dashrightarrow}$$
is said to be $\F$-\emph{exangle} if $\del$ is in $\F(C,A)$.

When constructing subfunctors $\F$ of $\E\colon\C\op\times\C\to\Ab$, one must in particular show that
$\F(C,A)$ is a subgroup of $\E(C,A)$ for all pairs of objects $A$ and $C$ in $\C$.
The following result is useful to show this property.

\begin{lemma}\label{lem1}
Let $\F$ be a subfunctor of $\E\colon\C\op\times\C\to\sets$. Then
$\F$ is an additive subfunctor of $\E\colon\C\op\times\C\to\Ab$
if and only if $\F$ is closed under direct sums of $\F$-exangles.
\end{lemma}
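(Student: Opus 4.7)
The plan is to reduce the equivalence to two standard identities that hold in any biadditive bifunctor $\E\colon\C\op\times\C\to\Ab$, namely the Baer-sum formula and its dual description of the biadditive direct sum of extensions,
\[\delta\oplus\delta'=(\iota_A)_{\ast}(\pi_C)^{\ast}\delta+(\iota_{A'})_{\ast}(\pi_{C'})^{\ast}\delta'\quad\text{in }\E(C\oplus C',A\oplus A'),\]
\[\delta+\delta'=(\nabla_A)_{\ast}(\Delta_C)^{\ast}(\delta\oplus\delta')\quad\text{in }\E(C,A),\]
where $\iota_{(-)}$ and $\pi_{(-)}$ are the canonical inclusions and projections of the direct sums and $\Delta_C\colon C\to C\oplus C$, $\nabla_A\colon A\oplus A\to A$ are the diagonal and codiagonal. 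Both identities are formal consequences of the biadditivity of $\E$ and make no reference to the realisation $\s$.

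For the forward direction, suppose $\F$ is an additive subfunctor. Given $\delta\in\F(C,A)$ and $\delta'\in\F(C',A')$, the subfunctor axiom forces both $(\iota_A)_{\ast}(\pi_C)^{\ast}\delta$ and $(\iota_{A'})_{\ast}(\pi_{C'})^{\ast}\delta'$ to lie in $\F(C\oplus C',A\oplus A')$. Since $\F(C\oplus C',A\oplus A')$ is an abelian subgroup of $\E(C\oplus C',A\oplus A')$, their sum $\delta\oplus\delta'$ also lies in $\F$, which is precisely closure of $\F$-exangles under direct sums.

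For the converse, assume $\F$ is closed under direct sums of $\F$-exangles. I will verify that each $\F(C,A)$ is an abelian subgroup of $\E(C,A)$. Containment of $0$ follows from $0=0_{\ast}\delta\in\F(C,A)$ once $\F(C,A)$ is non-empty, and the a-priori-empty case is handled by transporting any element of $\F$ through zero morphisms in both variables. Closure under negation is immediate from $-\delta=(-\id_A)_{\ast}\delta$, which lies in $\F$ by the subfunctor property. Closure under addition comes from the Baer-sum identity: the hypothesis gives $\delta\oplus\delta'\in\F(C\oplus C,A\oplus A)$ for any $\delta,\delta'\in\F(C,A)$, and then applying $(\Delta_C)^{\ast}$ and $(\nabla_A)_{\ast}$ yields $\delta+\delta'\in\F(C,A)$.

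The only substantive point of the proof is the verification of the two displayed identities; given the biadditivity of $\E$, both follow from routine manipulation with the canonical split biproduct morphisms, so no axiom of the realisation $\s$ nor of the $n$-exangulated structure (EA1), (EA2), (EA2$\op$) enters the argument. The lemma is therefore really a statement about subfunctors of a biadditive bifunctor into $\Ab$, and the only mild obstacle is bookkeeping of the $\iota,\pi,\Delta,\nabla$ morphisms when expanding the Baer-sum formulas.
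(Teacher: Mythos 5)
Your proof is correct and follows essentially the same route as the paper: both reduce the statement to the Baer-sum identity $\delta+\delta'=(\nabla_A)_{\ast}(\Delta_C)^{\ast}(\delta\oplus\delta')$ together with the observations that $0$ and $-\delta$ are obtained from $\delta$ by applying $\E$ to suitable morphisms, so that being a subgroup is equivalent to closure under Baer sums. The only difference is cosmetic: you spell out the forward direction via the decomposition $\delta\oplus\delta'=(\iota_A)_{\ast}(\pi_C)^{\ast}\delta+(\iota_{A'})_{\ast}(\pi_{C'})^{\ast}\delta'$, whereas the paper delegates that half to Auslander--Solberg.
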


\proof This proof is an adaptation of the proof of \cite[Lemma 1.1]{AS}.
We sketch the proof for the convenience of the reader.

Let $\F$ be a subfunctor of $\E\colon\C\op\times\C\to\sets$.
Then for all objects $A$ and $C$ in $\C$, we have that $0$ is in $\F(C,A)$ and if
$\del$ is in $\F(C,A)$, then $-\del$ is in $\F(C,A)$.
The proof of these facts goes as follows.
Let
$$A\xrightarrow{}A_1\xrightarrow{}A_2\xrightarrow{}\cdots\xrightarrow{}A_{n-1}
\xrightarrow{}A_n\xrightarrow{}C\overset{\delta}{\dashrightarrow}$$
be  an $\F$-\emph{exangle}. Since $\E$ is an additive bifunctor, the zero element in
$\E(C,A)$ is $\E(0,\textrm{id}_A)(\del)$ and the inverse of $\del$ is
$\E(-\textrm{id}_C,\textrm{id}_A)(\del)$. Since $\F$ is a subfunctor of $\E$
and $\del$ is $\F$-exangle, the sequences $\E(0,\textrm{id}_A)(\del)$
and $\E(-\textrm{id}_C,\textrm{id}_A)(\del)$ are again $\F$-exangle.
Therefore, $\F(C,A)$ is a subgroup of $\E(C,A)$ for all pairs of objects
$A$ and $C$ in $\C$ if and only if $\F(C,A)$ is closed under Baer sum.

Assume that $\F$ is closed under direct sums of $\F$-exangles.
We want to show that $\F$ is an additive subfunctor $\E\colon\C\op\times\C\to\Ab$.
It suffices to show that $\F(C,A)$ is a subgroup of $\E(C,A)$ for a pair of objects
$A$ and $C$ in $\C$ and that $\F$ is an additive bifunctor.
Let
$$A\xrightarrow{}A_1\xrightarrow{}A_2\xrightarrow{}\cdots\xrightarrow{}A_{n-1}
\xrightarrow{}A_n\xrightarrow{}C\overset{\delta}{\dashrightarrow}$$
and
$$A\xrightarrow{}B_1\xrightarrow{}B_2\xrightarrow{}\cdots\xrightarrow{}B_{n-1}
\xrightarrow{}B_n\xrightarrow{}C\overset{\delta'}{\dashrightarrow}$$
be in $\F(C,A)$. Then the Baer sum $\del+\del'$ is obtained by $\del+\del'=\E(\Delta_C,\nabla_A)(\del\oplus\del'),$
where $\Delta_C=\left[
              \begin{smallmatrix}
                1\\ 1
              \end{smallmatrix}
            \right]\colon C\to C\oplus C$ and $\nabla_A=\left[
              \begin{smallmatrix}
                1&1
              \end{smallmatrix}
            \right]\colon A\oplus A\to A$.
Since $\del\oplus\del'$ is in $\F(C\oplus C,A\oplus A)$ and
$\F$ is a subfunctor of $\E$ by assumption, it follows that $\del+\del'$ is in $\F(C,A)$ and
$\F(C,A)$ is a subgroup of $\E(C,A)$.

The remaining assertion is similar to \cite[Lemma 1.1]{AS}.  \qed
\medskip

Let $(\C,\E,\s)$ be an $n$-exangulated category and $\X$ be a subcategory of $\C$.
For each pair of objects $A$ and $C$ in $\C$, define
$$\F^{\X}(C,A)=
\{A_0\xrightarrow{f}A_1\xrightarrow{}A_2\xrightarrow{}\cdots\xrightarrow{}A_{n-1}
\xrightarrow{}A_n\xrightarrow{}A_{n+1}\overset{\delta}{\dashrightarrow}
\,\mid\, f\ \textrm{is an}\ \X\textrm{-monic}\}.$$
Dually, we define for each pair of objects $A$ and $C$ in $\C$
$$\F_{\X}(C,A)=
\{A_0\xrightarrow{}A_1\xrightarrow{}A_2\xrightarrow{}\cdots\xrightarrow{}A_{n-1}
\xrightarrow{}A_n\xrightarrow{g}A_{n+1}\overset{\delta}{\dashrightarrow}
\,\mid\, g\ \textrm{is an}\ \X\textrm{-epic}\}.$$

It is not obvious that these constructions give additive subfunctors of
$\E$, a fact we prove next.

\begin{lemma}\label{lem2}
Let $(\C,\E,\s)$ be an $n$-exangulated category and $\X$ be any subcategory of $\C$. Then
$\F^{\X}$ and $\F_{\X}$ are additive subfunctor of $\E$.
\end{lemma}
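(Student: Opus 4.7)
The plan is to give an intrinsic cohomological characterization of $\F^{\X}(C,A)$ which bypasses the choice of representative of $\s(\del)$, and then derive the additive-subfunctor axioms as direct formal consequences. For any $X\in\X$, applying Lemma \ref{a1} to a distinguished $n$-exangle $A\xrightarrow{f}A_1\to\cdots\to A_n\to C\overset{\del}{\dashrightarrow}$ realizing a given extension $\del\in\E(C,A)$ produces the exact segment
$$\C(A_1,X)\xrightarrow{\C(f,\,X)}\C(A,X)\xrightarrow{\del\ush_X}\E(C,X).$$
Exactness at $\C(A,X)$ shows that $f$ is $\X$-monic if and only if $\del\ush_X=0$ for every $X\in\X$. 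Since the right-hand condition depends only on $\del$, it follows that $\F^{\X}(C,A)$ is well-defined as a subset of $\E(C,A)$ and admits the intrinsic description
$$\F^{\X}(C,A)=\{\del\in\E(C,A)\mid\del\ush_X=0\ \textrm{for all}\ X\in\X\}.$$

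Because $\del\mapsto\del\ush_X$ is a group homomorphism $\E(C,A)\to\Hom(\C(A,X),\E(C,X))$ for each $X$, the above intersection over $X\in\X$ is automatically an abelian subgroup of $\E(C,A)$. Functoriality then reduces to a one-line calculation: for $a\in\C(A,A')$, $c\in\C(C',C)$, $\del\in\F^{\X}(C,A)$, and $\phi\in\C(A',X)$ with $X\in\X$,
$$(c\uas a\sas\del)\ush_X(\phi)=\phi\sas c\uas a\sas\del=c\uas(\phi\circ a)\sas\del=c\uas\bigl(\del\ush_X(\phi\circ a)\bigr)=0,$$
using only the commutativity of covariant and contravariant actions in the bifunctor $\E$ together with the defining vanishing of $\del\ush_X$. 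Hence $c\uas a\sas\del\in\F^{\X}(C',A')$, so $\F^{\X}$ is an additive subfunctor of $\E$.

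The statement for $\F_{\X}$ follows by the formal dual argument, using the companion natural transformation $\del\ssh\colon\C(-,C)\Rightarrow\E(-,A)$ together with the first long exact sequence of Lemma \ref{a1} to obtain the dual characterization $\F_{\X}(C,A)=\{\del\in\E(C,A)\mid(\del\ssh)_X=0\ \textrm{for all}\ X\in\X\}$; the same two verifications then go through verbatim. The only delicate point, and the one that drives the entire approach, is confirming that being $\X$-monic on the first differential is an invariant of the extension $\del$ rather than of its representative complex, which is exactly what the first step settles via the exact $\Hom$ sequence. An alternative route via Lemma \ref{lem1} (checking closure under direct sums of $\F^{\X}$-exangles) would also work but is less conceptual and requires extra bookkeeping with mapping cones.
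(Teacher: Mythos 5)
Your proof is correct, but it takes a genuinely different route from the paper. The paper first establishes a reduction (its Lemma \ref{lem1}, adapted from Auslander--Solberg): a subfunctor of $\E$ is an additive subfunctor iff it is closed under direct sums of $\F$-exangles; it then verifies the subfunctor property for $\F^{\X}$ by hand, constructing a lift of $(u,\id_C)$ resp.\ $(\id_A,c)$, forming the mapping cocone, and chasing the resulting distinguished $n$-exangle to show that the first differential of the realization of $a_{\ast}\del$ (resp.\ $c^{\ast}\del$) is again $\X$-monic. You instead prove the intrinsic characterization $\F^{\X}(C,A)=\{\del\mid\del\ush_X=0\ \text{for all}\ X\in\X\}$ (and dually $\F_{\X}(C,A)=\{\del\mid(\del\ssh)_X=0\ \text{for all}\ X\in\X\}$) via exactness of the long $\Hom$-sequence of Lemma \ref{a1} at $\C(A_0,X)$, after which the subgroup property is immediate from additivity of $\E(C,g)$ in $\del$, and functoriality is the one-line computation $\phi\sas c\uas a\sas\del=c\uas(\phi\circ a)\sas\del=0$. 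This is essentially the mechanism behind \cite[Definition 3.18, Proposition 3.16]{HLN} and it buys two things the paper's argument does not make explicit: it shows at the outset that membership in $\F^{\X}(C,A)$ depends only on $\del$ and not on the chosen representative of the homotopy class $\s(\del)$, and it avoids the mapping-cocone diagram chases entirely. The paper's approach, by contrast, stays closer to the combinatorics of distinguished $n$-exangles and exhibits explicitly how $\X$-monicity propagates along morphisms of $n$-exangles, which is reused in spirit elsewhere in Section 4. Both arguments are complete; yours is shorter and more conceptual.
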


\proof We only show that $\F^{\X}$ is an additive subfunctor of $\E$, since
the proof for  $\F_{\X}$ is given by the dual argument.

By Lemma \ref{lem1}, it is enough to show that $\F^{\X}$ is closed under
direct sums of $\F^{\X}$-exangles and  that $\F^{\X}$ is a subfunctor of $\E\colon\C\op\times\C\to\Ab$.

We first show that $\F^{\X}$ is a subfunctor. Let
$$A\xrightarrow{\alpha_0}A_1\xrightarrow{\alpha_1}A_2\xrightarrow{\alpha_2}\cdots\xrightarrow{\alpha_{n-2}}A_{n-1}
\xrightarrow{\alpha_{n-1}}A_n\xrightarrow{\alpha_n}C\overset{\delta}{\dashrightarrow}$$
be in $\F^{\X}(C,A)$ and let $u\colon A\to B$ be a morphism in $\C$. Then
there exists a morphism of distinguished $n$-exangles
$$\xymatrix{A\ar[r]^{\alpha_0}\ar[d]^u&A_1\ar[r]^{\alpha_1}\ar[d]^{\omega_1}&A_2\ar[r]^{\alpha_2}\ar[d]^{\omega_2}&A_3\ar[r]^{\alpha_3}\ar[d]^{\omega_3}
&\cdot\cdot\cdot\ar[r]^{\alpha_{n-2}}&A_{n-1}\ar[r]^{\alpha_{n-1}}
\ar[d]^{\omega_{n-1}}&A_n\ar[r]^{\alpha_n}\ar[d]^{\omega_{n}}&C\ar@{-->}[r]^{\del}\ar@{=}[d]&\\
B\ar[r]^{\beta_0}&B_1\ar[r]^{\beta_1}&B_2\ar[r]^{\beta_2}&B_3\ar[r]^{\beta_3}&\cdot\cdot\cdot\ar[r]^{\beta_{n-2}}&B_{n-1}\ar[r]^{\beta_{n-1}}&B_n\ar[r]^{\beta_n}& C\ar@{-->}[r]^{a_{\ast}\delta}&}$$
Thus its mapping cocone
$$A\xrightarrow{\left[
              \begin{smallmatrix}
                -\alpha_0\\ u
              \end{smallmatrix}
            \right]}A_1\oplus B\xrightarrow{\left[
              \begin{smallmatrix}
                -\alpha_1&0\\ \omega_1&\beta_0
              \end{smallmatrix}
            \right]}A_2\oplus B_1\xrightarrow{\left[
              \begin{smallmatrix}
                -\alpha_2&0\\ \omega_3&\beta_1
              \end{smallmatrix}
            \right]}\cdots\xrightarrow{\left[
              \begin{smallmatrix}
                -\alpha_{n-1}&0\\ \omega_{n-1}&\beta_{n-2}
              \end{smallmatrix}
            \right]}A_{n}\oplus B_{n-1}\xrightarrow{\left[
              \begin{smallmatrix}
                \omega_n&\beta_{n-1}
              \end{smallmatrix}
            \right]}B_n\overset{~~}{\dashrightarrow}$$
is a distinguished $n$-exangle.

We claim that $\beta_0$ is an $\X$-monic.  Indeed, let $a\colon B\to X$ be any
morphism with $X\in\X$. Since $\alpha_0$ is an $\X$-monic, there exists a morphism
$b\colon B\to X$ such that $b\alpha_0=au$. It follows that
$\left[\begin{smallmatrix}
                b&a
              \end{smallmatrix}
            \right]\left[
              \begin{smallmatrix}
                -\alpha_0\\ u
              \end{smallmatrix}
            \right]=0$.  Thus there exists a morphism
$\left[\begin{smallmatrix}
                c&d
              \end{smallmatrix}
            \right]\colon A_2\oplus B_1\to X$ such that $$\left[
              \begin{smallmatrix}
               c&d
              \end{smallmatrix}
            \right]=\left[
              \begin{smallmatrix}
              b&a
              \end{smallmatrix}
            \right]\left[
              \begin{smallmatrix}
                -\alpha_1&0\\
                \omega_1&\beta_0
              \end{smallmatrix}
            \right].$$ In particular, we have $a=d\beta_0$.
This shows that $\beta_0$ is an $\X$-monic. Hence we obtain that $a_{\ast}\del$ is in $\F^{\X}(C,B)$.

Let $c\colon D\to C$ be any morphism in $\C$. Then
there is a morphism of distinguished $n$-exangles
$$\xymatrix{A\ar[r]^{\gamma_0}\ar@{=}[d]&D_1\ar[r]^{\gamma_1}\ar[d]^{\phi_1}&D_2\ar[r]^{\gamma_2}\ar[d]^{\phi_2}&D_3\ar[r]^{\gamma_3}\ar[d]^{\phi_3}
&\cdot\cdot\cdot\ar[r]^{\gamma_{n-2}}&D_{n-1}\ar[r]^{\gamma_{n-1}}
\ar[d]^{\phi_{n-1}}&D_n\ar[r]^{\gamma_n}\ar[d]^{\phi_{n}}&D\ar@{-->}[r]^{c^{\ast}\del}\ar[d]^{c}&\\
A\ar[r]^{\alpha_0}&A_1\ar[r]^{\alpha_1}&A_2\ar[r]^{\alpha_2}&A_3\ar[r]^{\alpha_3}&\cdot\cdot\cdot\ar[r]^{\alpha_{n-2}}&A_{n-1}\ar[r]^{\alpha_{n-1}}&A_n\ar[r]^{\alpha_n}& C\ar@{-->}[r]^{\delta}&}$$
We claim that $\gamma_0$ is an $\X$-monic. Indeed, let $i\colon A\to X'$ be any
morphism with $X'\in\X$. Since $\alpha_0$ is an $\X$-monic, there exists a morphism
$j\colon B\to X'$ such that $i=j\alpha_0$ and then $i=j\phi_1\gamma_0$. This shows that
$\gamma_0$ is an $\X$-monic. Thus we have that $c^{\ast}\del$ is in $\F^{\X}(C',A)$.
It is easy to check that $\F^{\X}\colon\C\op\times\C\to\sets$ is a bifunctor, hence
$\F^{\X}$ is a subfunctor of $\E\colon\C\op\times\C\to\sets$.

Since the subfunctor $\F^{\X}$ clearly is closed under direct sums of
$\F^{\X}$-exangle, the proof is complete.  \qed

\begin{lemma}\label{key}
Let $(\C,\E,\s)$ be an $n$-exangulated category and $\X$ be any subcategory of $\C$. Then
\begin{itemize}
\item[\rm (1)] $(\C,\F^{\X},\s_{\F^{\X}})$ is an $n$-exangulated category.
\item[\rm (2)] $(\C,\F_{\X},\s_{\F_{\X}})$ is an $n$-exangulated category.
\end{itemize}
\end{lemma}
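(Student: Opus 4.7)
The plan is to invoke Lemma \ref{a6} applied to the additive subfunctors $\F^{\X}$ and $\F_{\X}$, whose additivity has just been established in Lemma \ref{lem2}. By that lemma, statement (1) reduces to showing that $\s_{\F^{\X}}$-inflations are closed under composition, and dually statement (2) reduces to showing that $\s_{\F_{\X}}$-deflations are closed under composition. I will write out the argument for (1); the proof of (2) is obtained by the exact dual.

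The key preliminary observation is that a morphism $f$ is an $\s_{\F^{\X}}$-inflation if and only if $f$ is an $\s$-inflation which is $\X$-monic. Indeed, unfolding the definition, $f$ is an $\s_{\F^{\X}}$-inflation precisely when it is the $0$th differential of some distinguished $n$-exangle realizing an extension in $\F^{\X}$, and by the very construction of $\F^{\X}$ this forces the $0$th differential to be $\X$-monic. Conversely, if $f$ is an $\s$-inflation and is $\X$-monic, then any distinguished $n$-exangle in $(\C,\E,\s)$ starting with $f$ realizes an extension that belongs to $\F^{\X}$, so $f$ is an $\s_{\F^{\X}}$-inflation.

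Granting this equivalence, suppose $f\colon A\to B$ and $g\colon B\to C$ are two $\s_{\F^{\X}}$-inflations. Axiom (EA1) of the ambient structure $(\C,\E,\s)$ immediately supplies that $g\circ f$ is an $\s$-inflation, so it remains to check that $g\circ f$ is $\X$-monic. For any morphism $h\colon A\to X$ with $X\in\X$, using that $f$ is $\X$-monic we obtain $h'\colon B\to X$ with $h'\circ f=h$, and then using that $g$ is $\X$-monic we obtain $h''\colon C\to X$ with $h''\circ g=h'$, whence $h''\circ(g\circ f)=h$. Thus $g\circ f$ is an $\X$-monic $\s$-inflation, i.e., an $\s_{\F^{\X}}$-inflation, which is what Lemma \ref{a6} needs.

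The argument is essentially a bookkeeping exercise, and the only real delicate point is the characterization of $\s_{\F^{\X}}$-inflations as $\X$-monic $\s$-inflations; concretely, one must convince oneself that membership of an extension in $\F^{\X}$ is a property of the extension itself rather than of a chosen realization. This is the step I expect to need the most care. Once that is settled, closure under composition follows immediately from (EA1) and the elementary two-step factorization of a morphism into $\X$.
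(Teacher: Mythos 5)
Your proposal is correct and follows the same route as the paper, which likewise deduces the result from Lemma \ref{lem2} and Lemma \ref{a6} via the observation that $\X$-monics (resp.\ $\X$-epics) are closed under composition. In fact your write-up is more detailed than the paper's two-line proof, since you also make explicit the identification of $\s_{\F^{\X}}$-inflations with $\X$-monic $\s$-inflations, which the paper leaves implicit.
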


\proof  By Lemma \ref{lem2} and Lemma \ref{a6}, $\X$-monomorphisms are closed under composition, and $\X$-epimorphism are closed under composition. The result then follows.  \qed

\begin{theorem}\label{main2}
Let $(\C,\E,\s)$ be an $n$-exangulated category.
If $\X$ is a strongly functorially finite subcategory of $\C$, then $(\C,\F,\s_{\F})$ is a Frobenius $n$-exangulated category whose projective-injective objects are precisely $\X$, where $\F:=\F^{\X}\cap~\F_{\X}$.
\end{theorem}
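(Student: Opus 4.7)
By Lemma~\ref{lem2}, both $\F^{\X}$ and $\F_{\X}$ are additive subfunctors of $\E$, hence so is their intersection $\F=\F^{\X}\cap\F_{\X}$. To establish that $(\C,\F,\s_{\F})$ is $n$-exangulated, by Lemma~\ref{a6} it suffices to check that $\s_{\F}$-inflations are closed under composition. Given two $\s_{\F}$-inflations $f\colon A\to B$ and $f'\colon B\to B'$, both are $\X$-monic, so their composite $f'\circ f$ is also $\X$-monic; moreover both are $\s_{\F_{\X}}$-inflations, so by Lemma~\ref{key}(2), $f'\circ f$ admits a distinguished $n$-exangle whose first map is $f'\circ f$ and whose last map is $\X$-epic. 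Since $f'\circ f$ is already $\X$-monic, the extension of this $n$-exangle lies in $\F^{\X}\cap\F_{\X}=\F$, so $f'\circ f$ is an $\s_{\F}$-inflation.

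Next I identify the $\F$-projective-injective objects with $\X$. For $X\in\X$ and any $\delta\in\F(X,A)$, pick a realization $A\to A_{1}\to\cdots\to A_{n}\xrightarrow{g}X\dashrightarrow$ with $g$ an $\X$-epic (such a representative exists since $\delta\in\F_{\X}$); as $X\in\X$, $\Hom(X,g)$ is surjective, so $g$ admits a section and $\delta=0$ by \cite[Claim~2.15]{HLN}. Hence $\F(X,-)=0$ and $X$ is $\F$-projective by Lemma~\ref{a5}; dually $X$ is $\F$-injective. For the reverse inclusion, using the construction in the next paragraph, every $\F$-projective $P$ fits into an $\F$-exangle $B\to X_{1}\to\cdots\to X_{n}\to P\dashrightarrow$ with $X_{i}\in\X$; by Lemma~\ref{a5}(3) this $n$-exangle splits, so $P$ is a direct summand of $X_{n}\in\X$, forcing $P\in\X$ (using closure of $\X$ under direct summands). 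Dually, $\F$-injective objects lie in $\X$.

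The most delicate step is constructing, for each $C\in\C$, an $\F$-exangle
$$B\to X_{1}\to\cdots\to X_{n}\to C\dashrightarrow$$
with all $X_{i}\in\X$. Strong contravariant finiteness provides a distinguished $n$-exangle of this shape whose last map is a right $\X$-approximation of $C$; its $\E$-extension automatically lies in $\F_{\X}$, so the challenge is to ensure it also lies in $\F^{\X}$, i.e., that the first map is $\X$-monic. My plan is to use the left $\X$-approximation $n$-exangle of the first term (supplied by strong covariant finiteness, whose first map is $\X$-monic) and splice it with the right $\X$-approximation conflation of $C$ via the axioms (EA2) and (EA2$\op$); the resulting distinguished $n$-exangle ending at $C$ should have middle terms in $\X$ and an $\X$-monic first map. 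This splicing, which arranges a single realization of the extension simultaneously satisfying both the $\X$-monic and $\X$-epic conditions, is the main technical obstacle of the proof. The dual construction, interchanging the roles of strong contravariant and covariant finiteness, yields enough $\F$-injectives. Combined with the previous paragraph, $(\C,\F,\s_{\F})$ is then a Frobenius $n$-exangulated category whose projective-injective objects are precisely $\X$.
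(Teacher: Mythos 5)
Your first paragraph is correct and is actually more careful than the paper itself, which simply invokes Lemma~\ref{key} for the $n$-exangulated structure of $(\C,\F,\s_{\F})$: your observation that a composite of two $\s_{\F}$-inflations is an $\s_{\F_{\X}}$-inflation which happens to be $\X$-monic, hence realizes an extension in $\F^{\X}\cap\F_{\X}$, is exactly the bridge the paper leaves implicit. The inclusion $\X\subseteq\P$ in your second paragraph also matches the paper. The genuine gap is in your third paragraph, and you have named it yourself: the construction of an $\F$-exangle $B\to X_1\to\cdots\to X_n\to C\dashrightarrow$ with $X_i\in\X$ is only described as a ``plan'' to splice the left- and right-approximation sequences via (EA2), and is never carried out. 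Since both the reverse inclusion $\P\subseteq\X$ and the ``enough projectives/injectives'' claims rest on this construction, your argument is incomplete at its decisive step.

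What makes this worth dwelling on is that the paper's proof is silent at exactly the same point: it takes the distinguished $n$-exangle $C\to X_1\to\cdots\to X_n\xrightarrow{g}P\dashrightarrow$ supplied by strong contravariant finiteness and applies Lemma~\ref{a5} \emph{in the category} $(\C,\F,\s_{\F})$ to conclude that it splits. But Lemma~\ref{a5} applies only to distinguished $n$-exangles of that structure, i.e.\ to $\F$-exangles, and the extension of the approximation sequence is only known to lie in $\F_{\X}$; nothing guarantees that its first map is $\X$-monic. Your instinct that this needs proof is right, but the obstacle is not removable by splicing: take $n=1$, $\C=D^b(k)$ and $\X=\add(k\oplus k[1])$, which is strongly functorially finite. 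One checks that $\F(k[2],-)=0$ (every nonzero $\delta\in\E(k[2],A)$ fails to be in $\F^{\X}$ because $\mathrm{Hom}(k[1],k[1])\neq 0$), while the extension $\id\in\E(k[3],k[2])$ does lie in $\F$ since $\mathrm{Hom}(k[2],\X)=0=\mathrm{Hom}(\X,k[3])$; thus $k[2]$ is $\F$-projective but neither $\F$-injective nor in $\X$, and no $\F$-exangle of the required shape ending at $k[2]$ exists. So the splicing cannot succeed in general, and some additional hypothesis is needed. The applications in Section 5 (mutation pairs, Frobenius $n$-exact categories) survive because there $\F^{\X}=\F_{\X}$ (Lemma~\ref{b1}), which makes the approximation sequences automatically $\F$-exangles; an assumption of this kind is what both your argument and the paper's argument actually require.
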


\proof By Lemma \ref{key}, we have that $(\C,\F,\s_{\F})$ is an $n$-exangulated category.
By the definition of $\F$, we know that every object in $\X$ is projective and injective in $(\C,\F,\s_{\F})$.

Take any projective object $P$ in $(\C,\F,\s_{\F})$, since $\X$ is a strongly contravariantly finite subcategory of $\C$, there exists a distinguished  $n$-exangle
$$C\xrightarrow{}X_1\xrightarrow{}X_2\xrightarrow{}\cdots\xrightarrow{}X_{n-1}\xrightarrow{}X_{n}\xrightarrow{~g~}P\overset{}{\dashrightarrow}$$
where $g$ is a right $\X$-approximation of $P$ and $X_i\in\X$.
By Lemma \ref{a5}, we have that this $n$-exangle splits implying that $P\in\X$. The injective objects can be treated similarly.
Hence the classes of projective objects and injective objects in $(\C,\F,\s_{\F})$ both equal $\X$.

Since $\X$ is a strongly functorially finite subcategory of $\C$, we have that
$(\C,\F,\s_{\F})$ has enough projectives and enough injectives.
This completes the proof.  \qed

\begin{remark}\label{rem1} In the case $\X\not= \{0\}$, the new Frobenius $n$-exangulated category  $(\C,\F,\s_{\F})$ in Theorem \ref{main2} is not $(n+2)$-angulated, since $\X$ is projective and injective and non-zero.  In the case $\X\not=\C$, it is easy to see that the new Frobenius $n$-exangulated category $(\C,\F,\s_{\F})$  in Theorem \ref{main2} is not $n$-exact. Otherwise any $\F$-extension splits and then any object in $\C$ is projective and injective. Then $\X=\C$, a contradiction. We take a subcategory $\{0\}\not=\X\varsubsetneqq \C$ satisfying the condition  in Theorem \ref{main2}, Then the new Frobenius $n$-exangulated category  $(\C,\F,\s_{\F})$  in Theorem \ref{main2} is neither $n$-exact nor $(n+2)$-angulated. This provides  us with many new $n$-exangulated categories which are neither $n$-exact nor $(n+2)$-angulated.
\end{remark}

\section{Examples}
In this section, we will give some examples to explain our main results.

\begin{example}
Let $\C$ be a Frobenius $n$-exact category. It is easy to see that $\X:=\cal P=\cal I$ is  a strongly functorially finite subcategory of $\C$. By Theorem \ref{main2}, we have that $(\C,\F,\s_{\F})$ is a Frobenius $n$-exangulated category whose projective-injective objects are precisely $\X$, where $\F:=\F^{\X}\cap~\F_{\X}$. By Theorem \ref{main1}, we know that $\C/\X$ is an $(n+2)$-angulated category.
\end{example}

Let $\C$ be $(n+2)$-angulated category. From this point on, when we say that $\X$ is
a subcategory of $\C$, we always mean that $\X$ is full and closed under isomorphisms,
direct sums and direct summands.

The notion of mutation pairs of subcategories in an $(n+2)$-angulated category was
defined by Lin \cite[Definition 3.1]{L}. We recall the definition here.

\begin{definition}
Let $\C$ be an $(n+2)$-angulated category, and $\X\subseteq\A$ be two subcategories of $\C$.
 The pair $(\A,\A)$ is called a $\X$-\emph{mutation pair} if it satisfies the following conditions:
\begin{itemize}
\item[(1)] For any object $A\in\A$, there exists an $(n+2)$-angle $$A\xrightarrow{x_0}X_1\xrightarrow{x_1}X_2\xrightarrow{x_2}\cdots\xrightarrow{x_{n-2}}X_{n-1}\xrightarrow{x_{n-1}}X_{n}\xrightarrow{x_{n}}B\xrightarrow{x_{n+1}}\Sigma A$$
where $X_i\in\X, B\in\A, x_0$ is a left $\X$-approximation of $A$ and $x_{n}$ is a right $\X$-approximation of $B$.

\item[(2)] For any object $C\in\A$, there exists an $(n+2)$-angle $$D\xrightarrow{x'_0}X'_1\xrightarrow{x'_1}X'_2\xrightarrow{x'_2}\cdots\xrightarrow{x'_{n-2}}X'_{n-1}\xrightarrow{x'_{n-1}}X'_n\xrightarrow{x'_{n}}C\xrightarrow{x'_{n+1}}\Sigma D$$
where $X'_i\in\X, D\in\A, x'_0$ is a left $\X$-approximation of $D$ and $x'_{n}$ is a right $\X$-approximation of $C$.
\end{itemize}
\end{definition}

\begin{lemma}\label{b1}
Let $\C$ be an $(n+2)$-angulated category and $\X\subseteq\A$ two subcategories of $\C$.
Assume that $$A_0\xrightarrow{\alpha_0}A_1\xrightarrow{\alpha_1}A_2\xrightarrow{\alpha_2}\cdots\xrightarrow{\alpha_{n-2}}A_{n-1}
\xrightarrow{\alpha_{n-1}}A_n\xrightarrow{\alpha_n}A_{n+1}\xrightarrow{\alpha_{n+1}}\Sigma A_0.$$
is any $(n+2)$-angle in $\A$.
If $(\A,\A)$ is an $\X$-mutation pair, then $\alpha_0$ is an $\X$-monic if and only if $\alpha_n$ is an $\X$-epic.
\end{lemma}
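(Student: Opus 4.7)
My plan is to prove the forward implication ($\alpha_0$ $\X$-monic $\Rightarrow$ $\alpha_n$ $\X$-epic) directly, and to deduce the reverse implication by passing to the opposite $(n+2)$-angulated category, under which $\X$-monics become $\X$-epics and the two conditions defining an $\X$-mutation pair simply swap roles.

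For the forward direction, I would first apply condition~(1) of the $\X$-mutation pair axiom to $A_0\in\A$ to obtain an $(n+2)$-angle
$$A_0\xrightarrow{x_0}X_1\xrightarrow{x_1}\cdots\xrightarrow{x_{n-1}}X_n\xrightarrow{x_n}B\xrightarrow{x_{n+1}}\Sigma A_0$$
with $X_i\in\X$, $B\in\A$, $x_0$ a left $\X$-approximation of $A_0$ and $x_n$ a right $\X$-approximation of $B$. Since $X_1\in\X$ and $\alpha_0$ is $\X$-monic, the morphism $x_0\colon A_0\to X_1$ factors as $x_0=h_1\alpha_0$ for some $h_1\in\Hom_{\C}(A_1,X_1)$.

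Next, I would invoke axiom (F3) of $(n+2)$-angulated categories to extend the compatible pair $(1_{A_0},h_1)$ to a morphism of $(n+2)$-angles
$$\xymatrix@C=0.8cm{A_0 \ar[r]^{\alpha_0} \ar@{=}[d] & A_1 \ar[r]^{\alpha_1}\ar[d]^{h_1} & \cdots \ar[r]^{\alpha_n} & A_{n+1} \ar[r]^{\alpha_{n+1}}\ar[d]^{h_{n+1}} & \Sigma A_0 \ar@{=}[d]\\ A_0 \ar[r]^{x_0} & X_1 \ar[r]^{x_1} & \cdots \ar[r]^{x_n} & B \ar[r]^{x_{n+1}} & \Sigma A_0.}$$
The crucial point here is that the last component of this morphism is forced to be $\Sigma(1_{A_0})=1_{\Sigma A_0}$, since the last square of any morphism of $(n+2)$-angles must commute with $\Sigma$ applied to the first component; consequently $\alpha_{n+1}=x_{n+1}h_{n+1}$. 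To conclude, take any $X\in\X$ and any $f\colon X\to A_{n+1}$. Since $x_n$ is a right $\X$-approximation of $B$, the morphism $h_{n+1}f$ factors as $x_n k$ for some $k\colon X\to X_n$, whence $\alpha_{n+1}f=x_{n+1}h_{n+1}f=x_{n+1}x_n k=0$ because consecutive morphisms in an $(n+2)$-angle compose to zero. The long exact sequence obtained by applying $\Hom_{\C}(X,-)$ to the top $(n+2)$-angle then yields $f=\alpha_n g$ for some $g\colon X\to A_n$, so $\alpha_n$ is $\X$-epic.

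The only genuine point requiring attention is the observation that (F3) produces a morphism of $(n+2)$-angles whose last component is exactly $1_{\Sigma A_0}$; this follows immediately from the commutativity of the last square together with $\Sigma(1_{A_0})=1_{\Sigma A_0}$. Everything else reduces to the standard long exact sequence and the defining properties of $\X$-monics, $\X$-epics, and $\X$-approximations. Passing to $\C^{\mathrm{op}}$ then delivers the reverse implication without further work.
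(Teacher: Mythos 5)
Your proof is correct, but the closing step is genuinely different from the one in the paper, and arguably cleaner. Both arguments begin identically: take the $(n+2)$-angle $A_0\to X_1\to\cdots\to X_n\to B\to\Sigma A_0$ supplied by condition (1) of the mutation-pair axiom, lift the left $\X$-approximation $x_0$ through the $\X$-monic $\alpha_0$, and extend to a morphism of $(n+2)$-angles. At that point the paper invokes the Bergh--Thaule mapping-cone lemma \cite[Lemma 4.1]{BT} to manufacture a \emph{new} $(n+2)$-angle with terms $A_{i+1}\oplus X_i$ and then runs an element chase through the matrix morphisms of that angle to produce the desired factorization of $a\colon X\to A_{n+1}$ through $\alpha_n$. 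You instead exploit only the commutativity of the last square, $\alpha_{n+1}=x_{n+1}h_{n+1}$, to deduce $\alpha_{n+1}f=x_{n+1}x_nk=0$, and then apply the long exact sequence of the \emph{original} $(n+2)$-angle under $\Hom_{\C}(X,-)$. This trades the mapping-cone machinery for nothing more than axiom (F3) and exactness, which makes the argument shorter and sidesteps the index bookkeeping that, in the paper's version, has in fact introduced a couple of typographical slips in the matrix entries. The observation you flag as the only delicate point --- that the rightmost vertical arrow in a morphism of $(n+2)$-angles is by definition $\Sigma$ applied to the leftmost one, hence $1_{\Sigma A_0}$ here --- is indeed correct and is exactly what makes the shortcut work. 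Your treatment of the converse by passing to the opposite category matches the paper, which likewise disposes of it as ``dual''; if you wanted to be fully scrupulous there you would note that the reversed sequence must be rotated to become an $(n+2)$-angle in $\C\op$ and that the mutation-pair conditions (1) and (2) exchange roles, but this is standard.
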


\proof Suppose that $\alpha_0$ is an $\X$-monic.
For object $A_0\in\A$, since$(\A,\A)$ is an $\X$-mutation pair, then there exists an $(n+2)$-angle
$$A_0\xrightarrow{f}X_1\xrightarrow{x_1}X_2\xrightarrow{x_2}\cdots\xrightarrow{x_{n-2}}X_{n-1}\xrightarrow{x_{n-1}}X_{n}\xrightarrow{g}B_0\xrightarrow{h}\Sigma A_0$$
where $f$ is a left $\X$-approximation of $A_0$ and $g$ is a right $\X$-approximation of $B_0$.
Consider the following diagram
$$\xymatrix{A_0\ar[r]^{\alpha_0}\ar@{=}[d]&A_1\ar[r]^{\alpha_1}\ar[d]^{\omega_1}&A_2\ar[r]^{\alpha_2}\ar@{-->}[d]^{\omega_2}&A_3\ar[r]^{\alpha_3}\ar@{-->}[d]^{\omega_3}
&\cdot\cdot\cdot\ar[r]^{\alpha_{n-2}}&A_{n-1}\ar[r]^{\alpha_{n-1}}\ar@{-->}[d]^{\omega_{n-1}}&A_n\ar[r]^{\alpha_n}\ar@{-->}[d]^{\omega_{n}}&A_{n+1}\ar[r]^{\alpha_{n+1}}\ar@{-->}[d]^{\omega_{n+1}}&\Sigma A_0\ar@{=}[d]\\
A_0\ar[r]^{f}&X_1\ar[r]^{x_1}&X_2\ar[r]^{x_2}&X_3\ar[r]^{x_3}&\cdot\cdot\cdot\ar[r]^{x_{n-2}}&X_{n-1}\ar[r]^{x_{n-1}}&X_n\ar[r]^{g}& B_0\ar[r]^{h}&\Sigma A_0}$$
where $\omega_1$ exists since $\alpha_0$ is an $\X$-monic.
By \cite[Lemma 4.1]{BT}, there are  morphisms $\omega_2,\omega_3,\cdots,\omega_n$ in $\C$ which make the above diagram commutative and there exists $(n+2)$-angle
$$A_1\xrightarrow{\left[
              \begin{smallmatrix}
                -\alpha_1\\ \omega_1
              \end{smallmatrix}
            \right]}A_2\oplus X_1\xrightarrow{\left[
              \begin{smallmatrix}
                \alpha_2&0\\ \omega_2&x_1
              \end{smallmatrix}
            \right]}\cdots\xrightarrow{\left[
              \begin{smallmatrix}
                \alpha_{n}&0\\ (-1)^n\omega_{n}&x_{n-1}
              \end{smallmatrix}
            \right]}A_{n+1}\oplus X_{n}\xrightarrow{\left[
              \begin{smallmatrix}
                (-1)^{n+1}\omega_{n+1}&g
              \end{smallmatrix}
            \right]}B_0\xrightarrow{\Sigma\alpha_0\circ h}\Sigma A_1.$$

Now we show that $\alpha_{n}$ is an $\X$-epic. In fact, for any morphism
$a\colon X\to A_{n+1}$ with $X\in\X$, there exists a morphism $b\colon X\to X_{n}$ such that
$gb=\omega_{n+1}a$ since $g$ is a right $\X$-approximation of $B_0$.
It follows that $$\left[
              \begin{smallmatrix}
                (-1)^{n+1}\omega_{n+1}&g
              \end{smallmatrix}
            \right]\left[
              \begin{smallmatrix}
                (-1)^na\\b
              \end{smallmatrix}
            \right]=0.$$
So there exists a morphism $\left[
              \begin{smallmatrix}
                c\\d
              \end{smallmatrix}
            \right]\colon X\to A_{n-1}\oplus X_{n-2}$ such that $$\left[
              \begin{smallmatrix}
                \alpha_{n-1}&0\\ (-1)^n\omega_{n-1}&x_{n-2}
              \end{smallmatrix}
            \right]\left[
              \begin{smallmatrix}
                c\\d
              \end{smallmatrix}
            \right]=\left[
              \begin{smallmatrix}
                (-1)^na\\b
              \end{smallmatrix}
            \right]$$
In particular, we have $\alpha_{n}c=(-1)^na$ and then $\alpha_{n}((-1)^nc)=a$.
This shows that $\alpha_{n}$ is an $\X$-epic.
The converse
implication is dual.   \qed
\medskip

Let $\C$ be an $(n+2)$-angulated category. Recall that a subcategory $\A$ of $\C$ is called
\emph{extension closed} if for any morphism $\alpha_{n+1}\colon A_{n+1}\to\Sigma A_0$ with
$A_0,A_{n+1}\in\A$, there exists an $(n+2)$-angle
$$A_0\xrightarrow{\alpha_0}A_1\xrightarrow{\alpha_1}A_2\xrightarrow{\alpha_2}\cdots\xrightarrow{\alpha_{n-2}}A_{n-1}
\xrightarrow{\alpha_{n-1}}A_n\xrightarrow{\alpha_n}A_{n+1}\xrightarrow{\alpha_{n+1}}\Sigma A_0.$$
where each $A_i\in\A$.

\begin{lemma}\label{b2}
Let $\C$ be an $(n+2)$-angulated category and $\X\subseteq\A$ be two subcategories of $\C$.
If $(\A,\A)$ is an $\X$-mutation pair and $\A$ is extension closed, then
\begin{itemize}
\item[\rm (1)] for any $\X$-monic $\alpha_0\colon A_0\to A_1$ in $\A$, there exists
an $(n+2)$-angle
$$A_0\xrightarrow{\alpha_0}A_1\xrightarrow{\alpha_1}A_2\xrightarrow{\alpha_2}\cdots\xrightarrow{\alpha_{n-2}}A_{n-1}
\xrightarrow{\alpha_{n-1}}A_n\xrightarrow{\alpha_n}A_{n+1}\xrightarrow{\alpha_{n+1}}\Sigma A_0$$
where each $A_i\in\A$.

\item[\rm (2)]for any $\X$-epic $\alpha_n\colon A_{n-1}\to A_n$, there exists
an $(n+2)$-angle
$$A_0\xrightarrow{\alpha_0}A_1\xrightarrow{\alpha_1}A_2\xrightarrow{\alpha_2}\cdots\xrightarrow{\alpha_{n-2}}A_{n-1}
\xrightarrow{\alpha_{n-1}}A_n\xrightarrow{\alpha_n}A_{n+1}\xrightarrow{\alpha_{n+1}}\Sigma A_0$$
where each $A_i\in\A$.
\end{itemize}
\end{lemma}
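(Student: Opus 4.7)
My plan is to prove part (1); part (2) will follow by dualizing every step. I begin by completing the given $\X$-monic $\alpha_0\colon A_0\to A_1$ to some $(n+2)$-angle
$$A_0\xrightarrow{\alpha_0}A_1\xrightarrow{\alpha_1}C_2\xrightarrow{\alpha_2}\cdots\xrightarrow{\alpha_n}C_{n+1}\xrightarrow{\alpha_{n+1}}\Sigma A_0$$
in $\C$. The whole task then reduces to showing that each intermediate object $C_i$ actually lies in $\A$.

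To do this I would compare this completion with a second $(n+2)$-angle produced by the mutation pair. Concretely, applying the $\X$-mutation-pair axiom to $A_0\in\A$ yields an $(n+2)$-angle
$$A_0\xrightarrow{f}X_1\to X_2\to\cdots\to X_n\to B_0\xrightarrow{h}\Sigma A_0$$
with $X_i\in\X$, $B_0\in\A$, and $f$ a left $\X$-approximation. Because $\alpha_0$ is $\X$-monic and $X_1\in\X$, there exists $g\colon A_1\to X_1$ with $g\alpha_0=f$. Exactly as in the proof of Lemma \ref{b1}, the lifting axiom (\cite[Lemma 4.1]{BT}) extends $(\id_{A_0},g)$ to a good morphism of $(n+2)$-angles carrying $\id_{\Sigma A_0}$ in the last column, whose mapping cone is an $(n+2)$-angle
$$A_1\to C_2\oplus X_1\to C_3\oplus X_2\to\cdots\to C_{n+1}\oplus X_n\to B_0\xrightarrow{\Sigma\alpha_0\circ h}\Sigma A_1.$$
On the other hand, since $A_1,B_0\in\A$ and $\A$ is extension closed, there is also an $(n+2)$-angle
$$A_1\to A'_2\to A'_3\to\cdots\to A'_{n+1}\to B_0\xrightarrow{\Sigma\alpha_0\circ h}\Sigma A_1$$
with each $A'_i\in\A$.

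The final step is to recognise these two $(n+2)$-angles as isomorphic. They share the first object $A_1$, the last object $B_0$, and the last morphism $\Sigma\alpha_0\circ h$, so the lifting axiom applied with identities on $A_1$ and $B_0$, together with the $5$-lemma for the long exact $\Hom$-sequences of an $(n+2)$-angulated category (the analog of the triangulated uniqueness of completions), produces an isomorphism of $(n+2)$-angles between them. This forces $A'_{i+1}\cong C_{i+1}\oplus X_i$ for $i=1,\ldots,n$; since $\A$ is closed under direct summands, every $C_i\in\A$, and combined with $A_0,A_1\in\A$ this furnishes the required $(n+2)$-angle. The main obstacle I anticipate is precisely this uniqueness-of-completion step, which requires carefully running the $(n+2)$-angulated $5$-lemma through every intermediate position. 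Part (2) is entirely dual: use the mutation-pair axiom at the right-hand end, replace $\X$-monic by $\X$-epic throughout, and replace mapping cones by mapping cocones.
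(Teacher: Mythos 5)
Your proposal is correct and follows essentially the same route as the paper: complete $\alpha_0$ to an $(n+2)$-angle, use the mutation pair and \cite[Lemma 4.1]{BT} to form the cone $(n+2)$-angle $A_1\to C_2\oplus X_1\to\cdots\to C_{n+1}\oplus X_n\to B_0\to\Sigma A_1$, and then use extension-closedness of $\A$ together with closure under direct summands to place each $C_i$ in $\A$. The only difference is that you spell out the uniqueness-of-completion step (lifting the identities on $A_1$ and $B_0$ and applying the five lemma to the long exact $\Hom$-sequences), which the paper's proof uses implicitly when it passes from ``$\A$ is extension closed'' to ``$A_{i+1}\oplus X_i\in\A$''; making this explicit is a correct and welcome clarification, not a change of method.
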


\proof We only give the proof of (1), the proof of (2) is obtained dually.

Suppose that $\alpha_0$ is an $\X$-monic in $\A$, we complete $\alpha_0$ to an $(n+2)$-angle
$$A_0\xrightarrow{\alpha_0}A_1\xrightarrow{\alpha_1}A_2\xrightarrow{\alpha_2}\cdots\xrightarrow{\alpha_{n-2}}A_{n-1}
\xrightarrow{\alpha_{n-1}}A_n\xrightarrow{\alpha_n}A_{n+1}\xrightarrow{\alpha_{n+1}}\Sigma A_0.$$
For object $A_0\in\A$, since$(\A,\A)$ is an $\X$-mutation pair, then there exists an $(n+2)$-angle
$$A_0\xrightarrow{f}X_1\xrightarrow{x_1}X_2\xrightarrow{x_2}\cdots\xrightarrow{x_{n-2}}X_{n-1}\xrightarrow{x_{n-1}}X_{n}\xrightarrow{g}B_0\xrightarrow{h}\Sigma A_0$$
where $B_0\in\A$, $f$ is a left $\X$-approximation of $A_0$ and $g$ is a right $\X$-approximation of $B_0$.
Consider the following diagram
$$\xymatrix{A_0\ar[r]^{\alpha_0}\ar@{=}[d]&A_1\ar[r]^{\alpha_1}\ar[d]^{\omega_1}&A_2\ar[r]^{\alpha_2}\ar@{-->}[d]^{\omega_2}&A_3\ar[r]^{\alpha_3}\ar@{-->}[d]^{\omega_3}
&\cdot\cdot\cdot\ar[r]^{\alpha_{n-2}}&A_{n-1}\ar[r]^{\alpha_{n-1}}\ar@{-->}[d]^{\omega_{n-1}}&A_n\ar[r]^{\alpha_n}\ar@{-->}[d]^{\omega_{n}}&A_{n+1}\ar[r]^{\alpha_{n+1}}\ar@{-->}[d]^{\omega_{n+1}}&\Sigma A_0\ar@{=}[d]\\
A_0\ar[r]^{f}&X_1\ar[r]^{x_1}&X_2\ar[r]^{x_2}&X_3\ar[r]^{x_3}&\cdot\cdot\cdot\ar[r]^{x_{n-2}}&X_{n-1}\ar[r]^{x_{n-1}}&X_n\ar[r]^{g}& B_0\ar[r]^{h}&\Sigma A_0}$$
where $\omega_1$ exists since $\alpha_0$ is an $\X$-monic.
By \cite[Lemma 4.1]{BT}, there are  morphisms $\omega_2,\omega_3,\cdots,\omega_n$ in $\C$ which make the above diagram commutative and there exists  an $(n+2)$-angle
$$A_1\xrightarrow{\left[
              \begin{smallmatrix}
                -\alpha_1\\ \omega_1
              \end{smallmatrix}
            \right]}A_2\oplus X_1\xrightarrow{\left[
              \begin{smallmatrix}
                \alpha_2&0\\ \omega_2&x_1
              \end{smallmatrix}
            \right]}\cdots\xrightarrow{\left[
              \begin{smallmatrix}
                \alpha_{n}&0\\ (-1)^n\omega_{n}&x_{n-1}
              \end{smallmatrix}
            \right]}A_{n+1}\oplus X_{n}\xrightarrow{\left[
              \begin{smallmatrix}
                (-1)^{n+1}\omega_{n+1}&g
              \end{smallmatrix}
            \right]}B_0\xrightarrow{\Sigma\alpha_0\circ h}\Sigma A_1.$$
Since $\A$ is extension closed and $B_0,A_1\in\A$, we have $A_2\oplus X_1,A_3\oplus X_2,\cdots,A_{n+1}\oplus X_{n}\in\A$ and then
$A_2,A_3,A_4,\cdots,A_{n+1}\in\A$.  \qed

\begin{corollary}\emph{\cite[Theorem 3.7]{L}}\label{cor1}
Let $\C$ be an $(n+2)$-angulated category and $\X\subseteq\A$ be two subcategories of $\C$.
If $(\A,\A)$ is an $\X$-mutation pair and $\A$ is extension closed,
then the quotient category $\A/\X$ is an $(n+2)$-angulated category.
\end{corollary}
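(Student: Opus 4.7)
The plan is to derive this corollary by cascading the two main theorems of the paper, applied not to $\C$ itself but to the subcategory $\A$ equipped with its induced $n$-exangulated structure. First, I would promote $\A$ to an $n$-exangulated category in its own right. Since $\C$ is $(n+2)$-angulated, it is $n$-exangulated by \cite[Proposition 4.5]{HLN}, so it carries a bifunctor $\E$ and exact realization $\s$. Because $\A$ is extension closed, the restriction of $\E$ to $\A\op\times\A$ and the restriction of $\s$ to extensions between objects of $\A$ should give a triplet $(\A,\E|_{\A},\s|_{\A})$ satisfying (R0)--(R2). Axiom (EA1) transfers from $\C$ directly, and (EA2) together with its dual should be verified using the fact that the good lift constructed in $\C$ has its middle terms forming a mapping cone whose components lie in $\A$ — here extension-closedness of $\A$ is precisely what keeps all new objects inside $\A$.

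Next, I would verify that $\X$ is a strongly functorially finite subcategory of $\A$ in the sense of Definition \ref{dd1}. This is essentially the content of the mutation pair axioms. Given $A\in\A$, condition (1) of the $\X$-mutation pair produces an $(n+2)$-angle
\[A\xrightarrow{x_0}X_1\xrightarrow{x_1}\cdots\xrightarrow{x_{n-1}}X_n\xrightarrow{x_n}B\xrightarrow{x_{n+1}}\Sigma A\]
with $X_i\in\X$, $B\in\A$, $x_0$ a left $\X$-approximation, and $x_n$ a right $\X$-approximation. By Lemma \ref{b1}, $x_0$ is an $\X$-monic (which is automatic) and correspondingly $x_n$ is an $\X$-epic; viewing this $(n+2)$-angle as a distinguished $n$-exangle in $\A$ witnesses the strongly covariantly finite property. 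Condition (2) of the mutation pair dually yields the strongly contravariantly finite property, so $\X$ is strongly functorially finite in $\A$.

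With these two inputs, Theorem \ref{main2} applies to $(\A,\E|_{\A},\s|_{\A})$ and produces a Frobenius $n$-exangulated structure $(\A,\F,\s_{\F})$ whose projective-injective objects coincide with $\X$. Theorem \ref{main1} then immediately gives that the stable category $\overline{\A}=\A/\X$ is an $(n+2)$-angulated category, which is exactly the claim of Corollary \ref{cor1}.

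The step I expect to require the most care is the first one: confirming that $(\A,\E|_{\A},\s|_{\A})$ really is $n$-exangulated, and in particular that (EA2) holds. One has to take a distinguished $n$-exangle $\langle X^{\mr},c^{\ast}\rho\rangle$ and $\langle Y^{\mr},\rho\rangle$ with all terms in $\A$, produce a good lift in $\C$, and then observe that its mapping cone $M_f^{\mr}$ again sits inside $\A$; this is where Lemma \ref{b2} together with extension-closedness of $\A$ does the real work, since it guarantees that when one splices an $\X$-approximation $(n+2)$-angle with the given data as in the proof of Lemma \ref{b2}, all newly appearing summands $A_i\oplus X_{i-1}$ land in $\A$. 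Once this foundational verification is done, the remaining steps are essentially bookkeeping via the general machinery already developed.
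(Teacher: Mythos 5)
Your proposal is correct and follows essentially the same route as the paper: restrict the $n$-exangulated structure of $\C$ to the extension-closed subcategory $\A$ (the paper simply cites \cite[Proposition 2.35]{HLN} here rather than re-verifying the axioms by hand), read off strong functorial finiteness of $\X$ in $\A$ from the two mutation-pair conditions, and then apply Theorems \ref{main2} and \ref{main1}. The only point the paper records that you omit is that Lemmas \ref{b1} and \ref{b2} yield $\F^{\X}=\F_{\X}$ in this setting, which is what guarantees that the approximation $n$-exangles supplied by the mutation pair are genuine $\F$-exangles for $\F=\F^{\X}\cap\F_{\X}$; this is worth stating explicitly, but it does not change the structure of the argument.
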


\proof Since $(\A,\A)$ is an $\X$-mutation pair, we have that $\X$ is strongly functorially finite $\A$.
Since $\A$ is extension closed, by \cite[Proposition 2.35]{HLN}, $\A$ is an $n$-exangulated category.
By Lemma \ref{b1} and Lemma \ref{b2}, we obtain $\F:=\F^{\X}=\F_{\X}$. By Theorem \ref{main2}, we have that
$(\A,\F,\s_{\F})$ is a Frobenius $n$-exangulated category whose projective-injective objects are precisely $\X$.
By Theorem \ref{main1}, we get that $\A/\X$ is an $(n+2)$-angulated category.  \qed

\medskip
From the proof of Corollary \ref{cor1}, we have the following.

\begin{example}\label{ex1}
Let $\C$ be an $(n+2)$-angulated category and $\X$ a subcategory of $\C$.
If $(\C,\C)$ is an $\X$-mutation pair, then $(\C,\F^{\X}=\F_{\X},\s_{\F^{\X}=~\F_{\X}})$ is a Frobenius $n$-exangulated category whose projective-injective objects are precisely $\X$.
\end{example}
\smallskip

Now we give a concrete example to explain our second main result.

\begin{example}
This example comes from \cite{L}.
Let $\T=D^b(kQ)/\tau^{-1}[1]$ be the cluster category of type $A_3$,
where $Q$ is the quiver $1\xrightarrow{~\alpha~}2\xrightarrow{~\beta~}3$,
$D^b(kQ)$ is the bounded derived category of finite generated modules over $kQ$, $\tau$ is the Auslander--Reiten translation and $[1]$ is the shift functor
of $D^b(kQ)$. Then $\T$ is a triangulated category. Its shift functor is also denoted by $[1]$.

We describe the
Auslander--Reiten quiver of $\T$ in the following:
$$\xymatrix@C=0.6cm@R0.3cm{
&&P_1\ar[dr]
&&S_3[1]\ar[dr]
&&\\
&P_2 \ar@{.}[rr] \ar[dr] \ar[ur]
&&I_2 \ar@{.}[rr] \ar[dr] \ar[ur]
&&P_2[1]\ar[dr]\\
S_3\ar[ur]\ar@{.}[rr]&&S_2\ar[ur]\ar@{.}[rr]
&&S_1\ar[ur]\ar@{.}[rr]&&P_1[1]
}
$$
It is straightforward to verify that $\C:=\add(S_3\oplus P_1\oplus S_1)$ is a $2$-cluster tilting subcategory of $\T$. Moreover, $\C[2]=\C$.  By \cite[Theorem 1]{GKO}, we know that $(\C,[2])$ is a $4$-angulated category.
Let $\X=\add(S_3\oplus S_1)$.
Then the $4$-angle
$$P_1\xrightarrow{~~}S_1\xrightarrow{~~}S_3\xrightarrow{~~}P_1\xrightarrow{~~}P_1[2]$$
shows that $(\C,\C)$ is an $\X$-mutation pair.
By Example \ref{ex1}, we obtain that $(\C,\F,\s_{\F})$ is a Frobenius $n$-exangulated category whose projective-injective objects are precisely $\X$, where $\F:=\F^{\X}=\F_{\X}$.
\end{example}

\section*{Acknowledgements}
This work was carried out when the second author is a postdoctoral fellow at Universit\'{e} de
Sherbrooke. The second author thanks Professor Shiping Liu for his helpful discussions and warm
hospitality. He also wants to thank his colleagues at Universit\'{e} de Sherbrooke for their
help.

\textbf{Yu Liu}\\
School of Mathematics, Southwest Jiaotong University, 610031, Chengdu,
Sichuan, P. R. China\\
E-mail: \textsf{liuyu86@swjtu.edu.cn}\\[0.3cm]
\textbf{Panyue Zhou}\\
College of Mathematics, Hunan Institute of Science and Technology, 414006, Yueyang, Hunan, P. R. China.\\
E-mail: \textsf{panyuezhou@163.com}

\end{document}